\documentclass[reqno]{amsart}
\usepackage[foot]{amsaddr}
\usepackage{graphicx} %
\usepackage{graphicx} %
\usepackage{amsmath,amssymb,amsthm}
\usepackage{algorithm}
\usepackage{algorithmic}
\usepackage{multicol}
\usepackage{fullpage}
\usepackage{hyperref}
\usepackage{xcolor}
\usepackage{mleftright}
\usepackage[shortlabels]{enumitem}
\usepackage{caption}
\usepackage{float}
\usepackage{hhline}
\usepackage{tikz}
\usepackage{subcaption}
\usepackage{tikz-cd}
\usepackage{quiver} 
\usepackage[nameinlink]{cleveref}
\usepackage{refcount}

\usepackage{mathtools}

\newcommand{\qand}{\quad\text{and}\quad}
\newcommand{\qwhere}{\quad\text{where}\quad}

\newcommand{\A}{{\mathcal{A}}}
\newcommand{\B}{{\mathcal{B}}}

\newcommand{\C}{{\mathbb{C}}}
\newcommand{\R}{{\mathbb{R}}}

\newcommand{\N}{{\mathbb{N}}}

\renewcommand{\H}{{\mathcal{H}}}

\newcommand{\eps}{\varepsilon}

\renewcommand{\Re}{\mathrm{Re}}
\renewcommand{\Im}{\mathrm{Im}}

\newcommand{\wrt}{\,\textnormal d}

\newcommand{\wt}{\widetilde}
\newcommand{\conj}{\overline}

\newcommand{\abs}[1]{\mleft|#1\mright|}
\newcommand{\sabs}[1]{|#1|}
\newcommand{\magn}[1]{\left\|#1\right\|}
\newcommand{\smagn}[1]{\|#1\|}
\newcommand{\pare}[1]{\mleft(#1\mright)}

\newcommand{\sqbrac}[1]{{\mleft[{#1}\mright]}}

\newcommand{\set}[1]{{\left\{{#1}\right\}}}
\newcommand{\bmat}[1]{\begin{bmatrix}#1\end{bmatrix}}
\newcommand{\pmat}[1]{\begin{pmatrix}#1\end{pmatrix}}
\newcommand{\alg}[1]{\textnormal{\texttt{#1}}}

\newcommand{\floor}[1]{\mleft\lfloor#1\mright\rfloor}

\newcommand{\spliteq}[2]{\begin{equation}#1\begin{split}#2\end{split}\end{equation}}
\newcommand{\eq}[1]{\begin{equation}{#1}\end{equation}}

\DeclareMathOperator*{\E}{\mathbb{E}}

\DeclareMathOperator{\vol}{Vol}

\DeclareMathOperator{\conv}{conv}

\DeclareMathOperator{\disk}{Disk}

\DeclareMathOperator{\dist}{dist}

\newcommand{\one}{\mathbf1}
\newcommand{\toth}{^{\textnormal{th}}}

\newtheorem{theorem}{Theorem}[section]
\newtheorem{lemma}[theorem]{Lemma}
\newtheorem{proposition}[theorem]{Proposition}
\newtheorem{corollary}[theorem]{Corollary}

\AddToHook{env/lemma/begin}{\crefalias{theorem}{lemma}}
\AddToHook{env/corollary/begin}{\crefalias{theorem}{corollary}}
\AddToHook{env/definition/begin}{\crefalias{theorem}{definition}}
\AddToHook{env/proposition/begin}{\crefalias{theorem}{proposition}}
\crefname{theorem}{Theorem}{Theorems}
\crefname{lemma}{Lemma}{Lemmas}
\crefname{equation}{}{}

\newcommand{\at}[1]{^{(#1)}}

\makeatletter
\newtheorem*{rep@theorem}{\rep@title}
\newcommand{\newreptheorem}[2]{%
\newenvironment{rep#1}[1]{%
 \def\rep@title{#2 \ref{##1}}%
 \begin{rep@theorem}}%
 {\end{rep@theorem}}}
\makeatother

\newreptheorem{theorem}{Theorem}
\newreptheorem{coro}{Corollary}

\newtheorem{remark}[theorem]{Remark}

\theoremstyle{definition}
\newtheorem{definition}[theorem]{Definition}

\title[The Petaloid Law]{Spectra of Random Polynomial Matrices: the Petaloid Law}
\author{Rikhav Shah$^\dagger$}
\address{$^\dagger$Massachusetts Institute of Technology}
\email{rdshah@mit.edu}
\author{Edward Zeng$^\ddagger$}
\address{$^\ddagger$New York University}
\email{ezeng@nyu.edu}
\date{\today}

\newcommand{\oned}{{1\textnormal D}}
\newcommand{\twod}{{2\textnormal D}}
\newcommand{\zerod}{{0\textnormal D}}
\newcommand{\inn}{\textnormal{in}}
\newcommand{\out}{\textnormal{out}}
\newcommand{\Cone}{\textnormal{C1} }

\newcommand{\calE}{\mathcal E}
\newcommand{\shape}{\mathcal A}

\newcommand{\Lloc}[1]{L_{\textnormal{loc}}^#1}

\begin{document}

\maketitle

\begin{abstract}

We study the distribution of the zeros of \(\det P_N(z)\) where $P_N(z)$ is a random monic polynomial matrix, i.e., \(P_N(z)=z^dI-\sum_{j=0}^{d-1}A_{j,N}z^j\) for possibly coupled random matrices $A_{j,N}$, scaled to have entrywise variance $O(1/N)$. We provide general conditions under which this distribution almost surely converges weakly as $N\to\infty$ to a deterministic measure, depending on just the variance and covariance of the entries of the $A_j$.

This generalizes the circular, elliptic, and semicircle laws, which concern the special case of this question where $d=1$.
Unlike those classical laws, these measures can combine nonuniform two-dimensional densities with singular components supported on curves, producing a variety of petal-shaped regions, inspiring our name ``the petaloid law''. We give explicit formulas for the limiting densities and supports.
Under a Gaussianity assumption, we also show that there are almost surely no eigenvalues outside small neighborhoods of the limiting support. 
\end{abstract}

\section{Introduction}

Given a matrix $A\in\C^{N\times N}$, the standard eigenvalue problem (SEP) asks for the zeros of the polynomial $\det(zI-A)$. A central problem in the field of random matrix theory has been understanding the distribution of the eigenvalues of a random matrix ensemble $A$, particularly in the limit of $N\to\infty$. More precisely, if $\set{\lambda_j}_{j=1}^N$ is the multiset of $N$ eigenvalues of $A$, then the measure
\eq{\label{a0}\mu_A(\B)=\frac1N\#\set{ j\in[N]:\lambda_j\in\B }}
denotes the empirical spectral density (ESD) of $A$.
Two of the cornerstone results in this area are Wigner's semi-circular law \cite{b0} and Girko's circular law \cite{b1}. The former concerns matrices $A$ whose entries above the diagonal are independent copies of a centered random variable with variance $1/N$, and entries below the diagonal are their conjugates so that $A=A^*$. In the large-$N$ limit, the ESD converges to the \textit{semi-circular law}, a distribution on the real interval $[-2,2]$ with density function proportional to $\sqrt{4-z^2}$. When the entries of $A$ are complex Gaussian in this setting, $A$ is called a GUE matrix. The latter concerns matrices $A$ where all entries are independent copies of a centered random variable with variance $1/N$. Since these matrices are not Hermitian, the eigenvalues are complex and in the large-$N$ limit, the ESD converges to the \textit{circular law}, the uniform distribution on the disk of radius 1. When the entries of $A$ are complex Gaussian in this setting, $A$ is called a Ginibre matrix. Pictorially, these distributions can be visualized in the scatter plots and histogram in \Cref{a1}.
\begin{figure}[ht]
    \centering

    \begin{subfigure}[t]{0.3\textwidth}
        \centering
        \includegraphics[width=\linewidth]{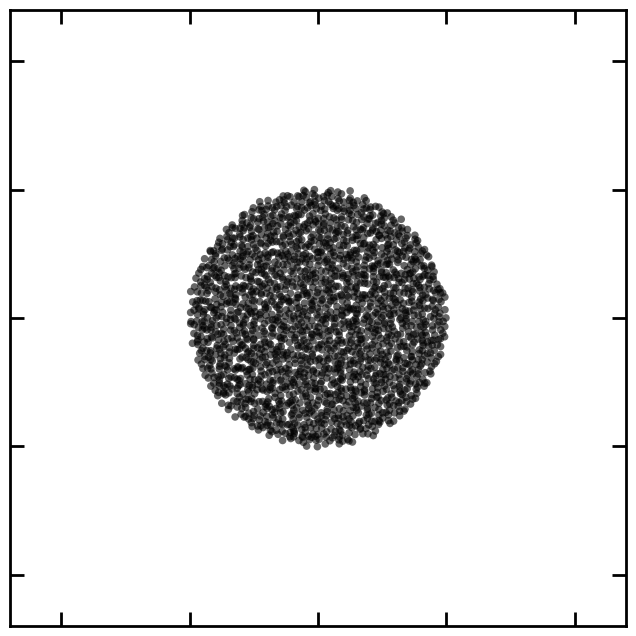}
        \caption{Scatter plot of the eigenvalues of a $2000\times2000$ Ginibre matrix.}
        \label{a2}
    \end{subfigure}
    \hspace{2mm}
    \begin{subfigure}[t]{0.3\textwidth}
        \centering
        \includegraphics[width=\linewidth]{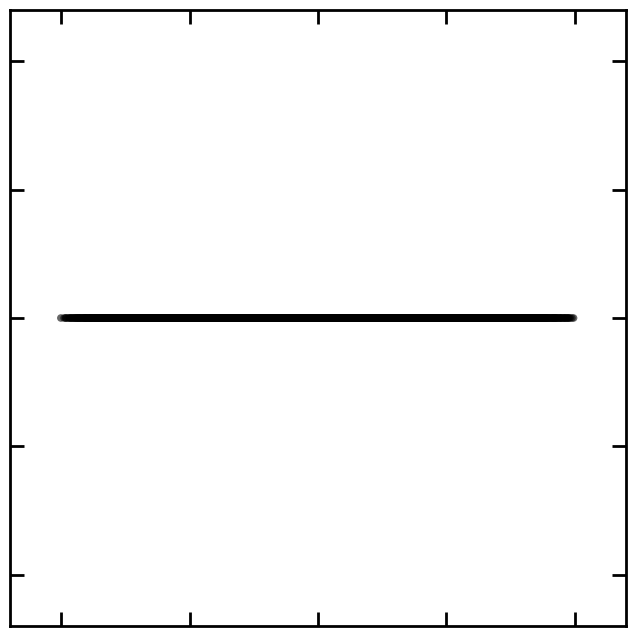}
        \caption{Scatter plot of the eigenvalues of a $2000\times2000$ GUE matrix.}\label{a3}
    \end{subfigure}
    \hspace{2mm}
    \begin{subfigure}[t]{0.3\textwidth}
        \centering
            \raisebox{0.5\height}{%
        \includegraphics[width=\linewidth]{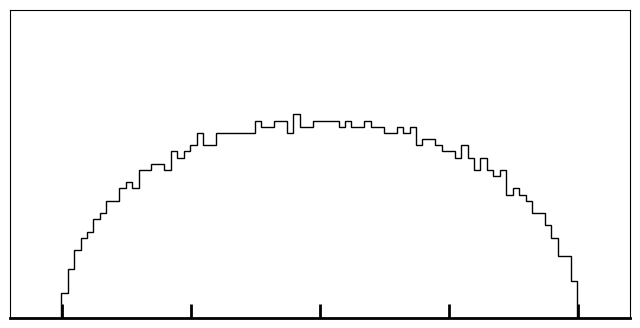}}
        \caption{Histogram of the eigenvalues of a $2000\times2000$ GUE matrix.}
        \label{a4}
    \end{subfigure}
    \caption{Demonstration of the circular and semi-circular laws.}
    \label{a1}
\end{figure}

A natural generalization of the SEP that has received less attention is the \textit{polynomial eigenvalue problem} (PEP).
This generalization replaces $zI-A$ with a higher-degree polynomial matrix,
\[P(z)=z^dI-A_{d-1}z^{d-1}-\cdots-A_1z-A_0\in\C[z]^{N\times N}\]
for a tuple of matrices $A_0,\ldots,A_{d-1}\in\C^{N\times N}$. The relevant quantity remains the roots of the scalar polynomial $\det(P(z))$, which are the called the (generalized) eigenvalues of $P$. To be explicit about the terminology, when $d=1$ and $P(z)=zI-A_0$, the standard eigenvalues of the matrix $A_0$ are precisely the generalized eigenvalues of the polynomial matrix $P$.
This work focuses on the monic case where the leading coefficient is $I$, though this may also be replaced with any matrix. The PEP, especially the $d=2$ case where it is called the \textit{quadratic eigenvalue problem}, appears in a variety of contexts in physics and engineering, e.g. mass-spring-damper models, acoustics, control theory, model order reduction, etc \cite{b2,b3,b4}. Corresponding to \Cref{a0}, define the ESD of $P$ as the uniform distribution on the $Nd$ eigenvalues $\set{\lambda_j}_{j=1}^{Nd}$ of $P$,
\eq{\label{a5}
\mu_P(\mathcal B)=\frac1{Nd}\#\set{j\in[Nd]:\lambda_j\in\mathcal B}
}
This work seeks the large-$N$ limit of $\mu_P$ for distributions analogous to those for the circular and semi-circular laws. Already for $d=2$, numerical experiments reveal interesting behavior. In the following, $A_0$ and $A_1$ are independent GUE matrices and $P(z)=z^2-A_1z-A_0$. Even though the coefficients of $P$ are Hermitian, the eigenvalues of $P$ are not all real (in fact, half will be real and the other half complex).
\begin{figure}[ht]
    \centering

    \begin{subfigure}[t]{0.3\textwidth}
        \centering
        \includegraphics[width=\linewidth]{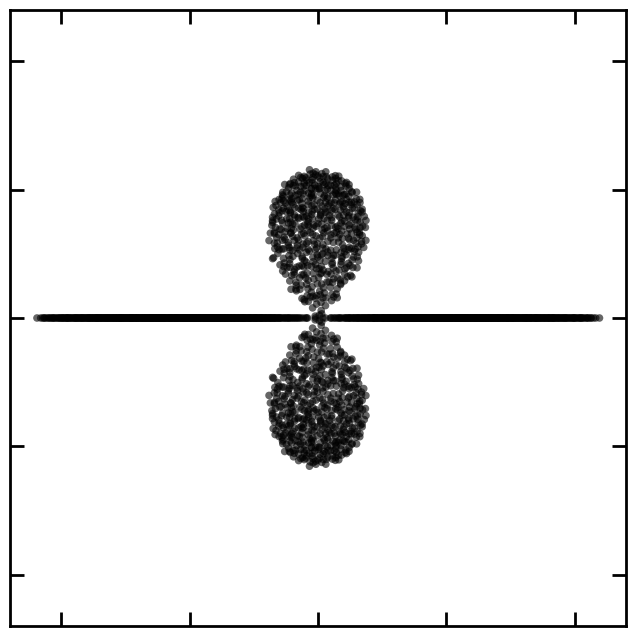}
        \caption{Scatter plot of all eigenvalues of $P$.}
        \label{a2}
    \end{subfigure}
    \hspace{2mm}
    \begin{subfigure}[t]{0.3\textwidth}
        \centering
        \raisebox{0.5\height}{%
            \centering
            \includegraphics[width=\linewidth]{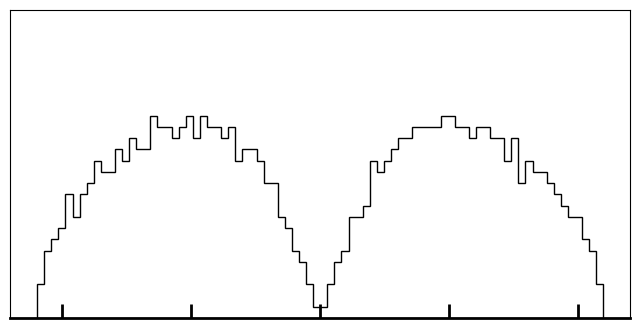}
        }
        \caption{Histogram of the real eigenvalues of $P$.}
        \label{a4}
    \end{subfigure}

    \caption{Demonstration of the Petaloid law for $P(z)=z^2-A_1z-A_0$ where $A_0$, $A_1$ are independent $1000\times1000$ GUE matrices.}
    \label{a6}
\end{figure}

Several factors make studying the PEP more difficult. Chief among them is that, unlike the SEP, the collection of eigenvalues $\lambda$ and corresponding eigenvectors $v$, i.e., vectors satisfying $P(\lambda)v=0$, do not correspond to a convenient factorization of $P$ or the $A_\ell$ matrices. Furthermore, the same eigenvector may correspond to multiple eigenvalues, and the collection of distinct eigenvectors will typically not be linearly independent (indeed, there may be $Nd$ of these in $\C^N$). For this reason, a primary approach to the PEP is a reduction to a SEP. In particular, given an $N\times N$ polynomial matrix $P$ of degree $d$, its \textit{companion linearization} is the $Nd\times Nd$ matrix, written in block form as
\[L_P=
\begin{bmatrix}
A_{d-1} & A_{d-2} & \cdots & A_1 & A_0\\
I & 0 & \cdots & 0 & 0\\
0 & I & \ddots & 0 & 0\\
\vdots & \vdots & \ddots & 0 & 0\\
0 & 0 & \cdots & I & 0
\end{bmatrix}\]
The generalized eigenvalues of $P$ are precisely the standard eigenvalues of $L_P$. This is particularly helpful in the algorithmic setting where one wishes to numerically compute the generalized eigenvalues \cite{b2,b3}.
In the random-matrix-theory setting, this linearization is not particularly suitable. In particular, natural distributions from which to sample $P$ do not push-forward to convenient distributions of $L_P$. For example, in the example depicted in \Cref{a6} where $A_0$ and $A_1$ are independent GUE matrices, $L_P$ is not a classically studied matrix distribution---it is not Hermitian and the entries above the diagonal are not independent. Worse yet, if one thinks about the entries of $P$ as random polynomials, dependence between the coefficients becomes quite natural, and then even the distinct blocks in the leading row of $L_P$ need not be independent.

This work proposes a different kind of reduction of the PEP to an SEP. Writing $P$ as
\eq{\label{a7}P(z)=z^dI-Q(z),\qwhere Q(z)=z^{d-1}A_{d-1}+\cdots+zA_1+A_0,}
see that $z\in\C$ is an eigenvalue of $P$ precisely when $z^d$ is an eigenvalue of $Q(z)$.
For finite $N$, this means the eigenvalues of $P$ are determined exactly by the eigenvalues of the collection of matrices $\set{Q(z)}_{z\in\C}$.
This work makes the following insight:
in the limit of $N\to\infty$, if, for each $z\in\C$, the ESDs of $Q(z)$ converge (in a stronger sense than weak convergence) to a known limit, then the limiting ESD of $P$ can be determined.

This insight, along with the previous work on the elliptic law of Nguyen and O'Rourke \cite{b5} which governs a class of matrices for which ESD convergence is known, results in \Cref{a8}. This theorem provides an expression for the large-$N$ limit of $\mu_P$ for a general class of distributions over polynomial matrices. The precise class is stated in \Cref{a9}; it principally includes the subclass where each matrix coefficient $A_\ell$ is a fixed complex linear combination of any number of underlying GUE matrices. More precisely, let $B_1,B_2,\ldots$ be a collection of independent GUE matrices. Fix any complex vectors $v_\ell=(v_{\ell,1},v_{\ell,2},\cdots)\in\ell_2(\C)$ and set $A_\ell=\sum_{j=1}^\infty v_{\ell,j}B_j$. For such $P$, \Cref{a8} derives a formula for the limit of $\mu_P$, and \Cref{a10} puts it into a more usable form. Furthermore, for this sub-class, using a result of Han \cite{b6}, we determine in \Cref{a11} that there are \textit{no outliers}. That is, there are almost surely no eigenvalues outside of any neighborhood of the limiting support of $\mu_P$.

Our last theorem, \Cref{a12}, concerns the double limit of large-$N$ and large-$d$. We determine a general condition in which the double limiting ESD of $\mu_P$ is the uniform distribution on the unit circle (that is, the \textit{boundary} of the unit disk).

We conclude the paper with several example distributions over $P$ and the corresponding numerically computed ESDs.

\newcommand{\cir}{\text{c}}
\renewcommand{\sc}{\text{sc}}

\subsection{Related work}
When the entries of a matrix have constant variance (as opposed to variance $O(1/N)$ as we assumed above), the rough order of size of the eigenvalues scales as $O(\sqrt N)$, and so some normalization is needed. In the SEP, this normalization can be done either by scaling the $N\times N$ matrix down by a factor of $1/\sqrt N$, or contracting the ESD inward by a factor of $1/\sqrt N$. In the SEP case, these operations are identical since
the roots of $\det(z\sqrt N I-A)$ and $\det(zI-\frac1{\sqrt N}A)$ coincide. For the monic PEP, these differ: if $P(z)=z^dI-Q(z)$, then the eigenvalues of $\det P(z\sqrt N)=\det((z\sqrt N)^dI-Q(z\sqrt N))$ are not the same as the eigenvalues of $\det(z^dI - \frac1{\sqrt N}Q(z))$.

The kind of normalization where the entries in the coefficients of $P$ have unit variance and one wants the distribution of the roots of $\det P(z\sqrt N)$ as $N\to\infty$ was recently studied by Barbarino and Noferini \cite{b7}. They show when the coefficient matrices $A_j$ are independent, unnormalized $N\times N$ complex Ginibre matrices, the limiting distribution is $\frac{d-1}d\delta_0+\frac1d\mu_\cir$ where $\mu_\cir$ is the uniform distribution on the disk of radius one, and $\delta_0$ is the Dirac delta at the origin. Their argument is to treat $P(z)$ as a perturbation to the polynomial matrix $z^dI-A_{d-1}z^{d-1}$, whose eigenvalues are simply the eigenvalues of $A_{d-1}$ concatenated with $N(d-1)$ copies of $0$.
For this reason, we speculate that, in wider generality, that there is convergence of the uniform distribution of the roots of $\det P(z\sqrt N)$ to $\frac{d-1}d\delta_0+\frac1d\lim\limits_{N\to\infty}\mu_{\frac1{\sqrt N}A_{d-1}}$. This paper focuses on the normalization where the coefficient matrices are normalized to have entries with variance $O(1/N)$.

A related rich area of research investigates the distribution of the roots of random polynomials, where the coefficients are either picked independently or subject to a specified covariance structure \cite{b8}; this reduces to the PEP when $N=1$. In that line of work, one typically is interested in the limit as the degree $d$ tends to infinity, in contrast with this work which focuses on fixed $d$ and $N$ tending to infinity.
One broad take-away from the random polynomial literature is the universality of concentration of roots around the unit circle \cite{b9,b10,b11,b12}. We observe a similar phenomenon: $\lim_{d\to\infty}\lim_{N\to\infty}\mu_P$ is the uniform measure on the circle.

\subsection{Notation and terminology}
When we use the terminology ``the eigenvalues of $P(z)$'', we always are referring to the $N$ standard eigenvalues of the matrix which is $P$ evaluated at $z\in\C$. When we want the $Nd$ generalized eigenvalues of the polynomial matrix, we will say the eigenvalues of $P$ or $P(\cdot)$. The letters $P$, $Q$, and $d$ will always be what they are in \Cref{a7}.

Similarly, $f_N(\cdot)\to f(\cdot)$ or just $f_N\to f$ means convergence of the sequence of functions $f_N$ to $f$ in $\Lloc1$ as $N\to\infty$, whereas $f_N(z)\to f(z)$ always means convergence of the sequence of scalars $f_N(z)$ to the scalar $f(z)$ for a particular $z\in\C$. A function $f$ may also be written as $(z\mapsto f(z))$.
For measures, $\mu_N\to\mu$ means that $\mu_N$ weakly converges to $\mu$ as $N\to\infty$.
$\delta$ is the Dirac delta function.
For a polynomial matrix $P\in(\C[\cdot])^{N\times N}$ with generalized eigenvalues $\lambda_1,\ldots,\lambda_{Nd}$, its ESD is
\(\mu_P(\mathcal B)=\frac1{Nd}\#\set{j\in[Nd]:\lambda_j\in\mathcal B}\), which is not to be confused with $\mu_{P(z)}$.
For a matrix $X\in\C^{N\times N}$ with eigenvalues $\lambda_1,\ldots,\lambda_{N}$, its ESD is
\(\mu_X(\mathcal B)=\frac1{N}\#\set{j\in[N]:\lambda_j\in\mathcal B}\). $X$ and $P$ often implicitly stand for a sequence of matrices and polynomial matrices of each size $N$, and the limits of $\mu_P$ and $\mu_X$ refer to the limits as $N\to\infty$.
$\H^d$ is the $d$-dimensional Hausdorff measure, scaled to agree with the Lebesgue measure.

\newcommand{\muel}{\mu^{\textnormal{ell}}}
\newcommand{\nuel}{\nu^{\textnormal{ell}}}
\section{The Petaloid Law}
As outlined in the introduction, the central idea in our proof is that the polynomial eigenvalue problem can be interpreted as a collection of standard eigenvalue problems of the same size: from $P(z)=z^d-Q(z)$, one sees that $z$ is an eigenvalue of $P$ if and only if $z^d$ is an eigenvalue of $Q(z)$.
This suggests that understanding the spectral distributions of $Q(z)$ for each fixed $z\in\C$ may lead to understanding the spectrum of $P(\cdot)$.

However, it is important to note that knowing the weak large-$N$ limits of $\mu_{Q(z)}$ for each $z$ is not enough to determine \textit{anything} about the spectrum of $P$.
Namely, the situation where $Q(z)$ either always or never contains a single eigenvalue exactly at $z^d$ for each $z$ is perfectly consistent with any weak limit of $\mu_{Q(z)}$ as the presence or absence of a single eigenvalue does not affect the limiting spectral measure.
Nevertheless, one might attempt a heuristic argument.
Consider a tiny disk $\B$ around some point $z'$. Then the number of solutions to $z^d\in\Lambda(Q(z))$ for $z$ inside $\B$, one may conjecture, is similar to the number of solutions to $z^d\in\Lambda(Q(z'))$.
This would make $\mu_P$, locally near $z'$, be the pull-back of $\mu_{Q(z')}$ under the map $z\mapsto z^d$.

The formula that this heuristic implies for the limiting spectral distribution turns out not to match empirical evidence. Our repair is as follows: instead of counting exact solutions to $z^d\in\Lambda(Q(z))$ in a small disk, we look at the geometric mean of the distances between $z^d$ and points in $\Lambda(Q(z))$. This is exactly a power of the characteristic polynomial of $Q(z)$ evaluated at $z^d$, namely,
\[\text{mean distance}=\abs{\det(z^dI-Q(z))}^{1/N}=\abs{\det(P(z))}^{1/N}.\] On the other hand, $\det(P(z))$ is monic and its roots are exactly the eigenvalues of $P$. Thus, $\abs{\det(P(z))}^{1/(Nd)}$ is the geometric mean of the distances between $z$ and the eigenvalues of $P$.
The ESDs can be recovered from these geometric means by taking the logarithm and then the distributional Laplacian, which establishes the formal relationship between the spectra of $Q(z)$ and $P$ which survives a large-$N$ limit.

The proof of \Cref{a8} is in three parts. First, in \Cref{a13}, we characterize the form of matrices with known limiting spectral densities we can take $Q(z)$ to have. Next, in \Cref{a14}, we introduce the logarithmic potential theory required (equivalent to the aforementioned geometric means). Lastly, in \Cref{a15}, we combine everything into our main result.

\subsection{elliptic matrices}\label{a13}
A class of matrices for which the limiting standard ESD is known are called the \textit{elliptic} matrices, defined below (cf. \cite[Definition 1.3]{b5}).

\begin{definition}[elliptic matrices]\label{a16}
Let $(\xi_1,\xi_2)$ be a centered random vector in $\C^2$ with $\E\abs{\xi_1}^2=\E\abs{\xi_2}^2<\infty$. Let $\set{x_{ij}}_{i,j\in\N}$ be an infinite double array. For each $N\in\N$, define the matrix $X\at N\in\C^{N\times N}$ with entries $X\at N_{ij}=\frac1{\sqrt N}x_{ij}$. We say that $X\at N$ is elliptic if
\begin{enumerate}
    \item The collection $\set{x_{ii}:i\in\N}\cup\set{(x_{ij},x_{ji}):i,j\in\N,i<j}$ consists of mutually independent elements.
    \item $x_{ii}$ for $1\le i$ are copies of a random variable with mean 0 and finite variance and
    \item $(x_{ij},x_{ji})$ for $1\le i<j$ are copies of $(\xi_1,\xi_2)$.
\end{enumerate}
We will say it is elliptic with correlation $\rho$, $\abs\rho\le1$ and variance $\sigma^2\ge0$ when $\E\abs{\xi_1}^2=\E\abs{\xi_2}^2=\sigma^2$ and $\rho=\E(\xi_1\xi_2)/\sigma^2$. If $\sigma^2=0$, this correlation is ill-defined, but as one may track throughout the paper, the dependence on the value of $\rho$ vanishes when $\sigma^2=0$.
For concision, we often omit the superscript and just write $X=X\at N$. We emphasize that $\rho$ is not the correlation between the coordinates of the vector $(\xi_1,\xi_2)$, but rather between the coordinates of $(\xi_1,\conj{\xi_2})$, i.e.,
\[
\E\pmat{\xi_1 \\ \bar{\xi_2}}
\pmat{\xi_1 \\ \bar{\xi_2}}^*=\sigma^2\pmat{ 1 & \rho \\ \bar \rho & 1 }.\]
\end{definition}
\begin{remark}
    When the coordinates of $(\xi_1,\xi_2)$ are i.i.d., these matrices include Ginibre ensembles and are governed by the circular law, corresponding to the $\rho=0$ case. When $(\xi_1,\xi_2)$ is supported on the set $\set{(z,\bar z):z\in\C}$ and the diagonal of $X$ is real, these matrices include GUE and GOE matrices and are governed by the semi-circular law, corresponding to $\rho=1$.
\end{remark}

When $(\xi_1,\xi_2)$ is jointly Gaussian, it is known that $\mu_X$ almost surely weakly converges to the uniform distribution on an ellipse, whose parameters depend on the variance $\sigma^2$ and correlation $\rho$ \cite{b13}. Denote the filled-in ellipse with foci $\pm f$ and semi-major axis $a$,
\[\calE_{f,a}=\set{z\in\C:\abs{z-f}+\abs{z+f}\le2a}.\]
For a matrix with variance 1 and correlation $\rho$, the relevant foci are $\pm\sqrt{4\rho}$ and the semi-major axis is $1+\abs\rho$.
When $\rho$ is real, the corresponding ellipse can be conveniently characterized explicitly in terms of Cartesian coordinates,
\[
\calE_{\sqrt{4r},1+\abs r}=\set{z\in\C:\frac{\Re(z)^2}{(1+r)^2} + \frac{\Im(z)^2}{(1-r)^2} \le 1},\, r\in(-1,1),\quad\calE_{2,2}=\conv\set{-2,2},\quad\calE_{2i,2}=\conv\set{2i,-2i}.
\]
Notice in particular that the major and minor semi-axes are $1+\abs r$ and $1-\abs r$ respectively. The picture for complex $\rho$ is simply rotated by $\frac12\arg\rho$.
If $X$ has Gaussian entries and is elliptic with variance 1 and complex correlation $\rho$, $\abs\rho<1$, the elliptic law states that
\[
\mu_X\to\frac1{\pi(1-\abs\rho^2)}\cdot\H^2\big|_{\calE_{\sqrt{4\rho}, 1+\abs\rho}},\]
where the right-hand side is the uniform distribution on the non-degenerate ellipse $\calE_{\sqrt{4\rho}, 1+\abs\rho}$. 
Taking the limit as $\abs\rho\to1$, one obtains a (rotation of the) semi-circular law; when $\abs\rho=1$, the ellipse becomes the line segment connecting the foci $\sqrt{4\rho}=\pm2e^{\frac i2\arg\rho}$, which is simply the real interval $[-2,2]$ when $\rho=1$. For non-unit variance $\sigma^2>0$, these measures can simply be dilated by $\sigma$.
For these limiting measures of $\mu_X$, we introduce abbreviated notation,
\eq{\label{a17}\muel_{\rho,\sigma}=\frac1{\pi\sigma^2(1-\abs\rho^2)}\cdot\H^2\big|_{\sigma\calE_{\sqrt{4\rho},1+\abs\rho}}\text{ for }\abs\rho<1,\quad \muel_{\rho,\sigma}=\lim_{r\to\rho}\muel_{r,\sigma}\text{ for }\abs\rho=1,\quad \muel_{\rho,0}=\delta_0.}
In the non-Gaussian case, full universality has not yet been determined.
To the authors' knowledge, the most general theorem in the literature is due to Nguyen and O'Rourke, who determine the following: say $X$ is elliptic with variance 1 and correlation $\rho\in(-1, 1)$. Under the assumption that there exists $\mu\in[0,1]$ such that
\eq{\label{a18}\E ZZ^\top=\pmat{ \mu & 0 & \mu\rho & 0 \\ 0 & 1-\mu & 0 & -(1-\mu)\rho \\ \mu\rho & 0 & \mu & 0 \\ 0 & -(1-\mu)\rho & 0 & 1-\mu }\qwhere
Z=\pmat{\Re(\xi_1)&\Im(\xi_1)&\Re(\xi_2)&\Im(\xi_2))}^\top,}
then $\mu_X\to\muel_{\rho,1}$.

By replacing $X$ with $\sigma e^{\frac i2\arg\rho}X$, this theorem immediately generalizes to all $\sigma$ and complex $\rho$ with $\abs\rho<1$. The direct analog of the condition \Cref{a18} (which is \cite[Definition 1.6 / Remark 1.7]{b5}) for handling non-real $\rho$ and non-unit $\sigma$ is the following.

\begin{definition}[C1]\label{a19}
$X$ satisfies \Cone if the following two conditions are met. First, it is elliptic, say with variance $\sigma^2$, correlation $\rho$, and underlying random elements $(\xi_1,\xi_2)$. Second, there exists $\tau$ with $\abs\tau\le1$ and $\tau\bar\rho\in\R$ such that
\[\E ZZ^* = \sigma^2\pmat{ 1 & \rho & \tau & \tau\bar\rho \\ \bar\rho & 1 & \tau\bar\rho & \bar\tau \\ \bar\tau & \tau\bar\rho & 1 & \bar\rho \\ \tau\bar\rho & \tau & \rho & 1 }\qwhere Z=\pmat{ \xi_1 & \bar{\xi_2} & \bar{\xi_1} & \xi_2 }^\top.\]
\end{definition}
\begin{remark}Both GUE and complex Ginibre matrices satisfy \Cone with $\tau=0$.\end{remark}

Then, the elliptic law of Nguyen and O'Rourke can be stated more generally as the following.
\begin{theorem}[{\cite[Theorem 1.8]{b5}}]\label{a20}
If $X$ satisfies \Cone with variance $\sigma^2>0$ and correlation $\rho$ with $\abs\rho<1$, then $\mu_X\to\muel_{\rho,\sigma}$ almost surely.
\end{theorem}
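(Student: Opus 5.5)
The plan is to reduce \Cref{a20} to the real-correlation, unit-variance theorem of Nguyen and O'Rourke, i.e.\ the statement with condition \Cref{a18}. We do this by a deterministic rescaling and rotation of $X$. Set $\theta=\frac12\arg\rho$ (any choice if $\rho=0$) and define $Y=\sigma^{-1}e^{-i\theta}X$. Since eigenvalues transform linearly, $\mu_X$ is the push-forward of $\mu_Y$ under $w\mapsto\sigma e^{i\theta}w$. It therefore suffices to show two things: that $Y$ satisfies the hypotheses of the Nguyen--O'Rourke theorem with correlation $r=\abs\rho\in[0,1)$, and that the push-forward of $\muel_{r,1}$ under this map is $\muel_{\rho,\sigma}$.

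First, we check that $Y$ is elliptic. Its entries are $\frac1{\sqrt N}y_{ij}$ with $y_{ij}=\sigma^{-1}e^{-i\theta}x_{ij}$. Independence and the identical distribution of the pairs are preserved by this scaling. The pairs $(\eta_1,\eta_2)=\sigma^{-1}e^{-i\theta}(\xi_1,\xi_2)$ have unit second moments. Their correlation is $\E(\eta_1\eta_2)=\sigma^{-2}e^{-2i\theta}\E(\xi_1\xi_2)=e^{-i\arg\rho}\rho=\abs\rho$, which is real and lies in $[0,1)$.

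Next, we translate \Cone into \Cref{a18}. The vector $Z'=(\eta_1,\bar\eta_2,\bar\eta_1,\eta_2)^\top$ equals $\sigma^{-1}DZ$ with $D=\mathrm{diag}(e^{-i\theta},e^{i\theta},e^{i\theta},e^{-i\theta})$. Hence $\E Z'Z'^*$ has the \Cone form with $\rho$ replaced by $\abs\rho$ and $\tau$ replaced by $\tau'=\tau e^{-2i\theta}$. The constraint $\tau\bar\rho\in\R$ gives $\tau'\abs\rho\in\R$. So when $\rho\ne0$, $\tau'$ is real with $\abs{\tau'}\le1$.

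We then pass from the complex vector $Z'$ to the real vector $W=(\Re\eta_1,\Im\eta_1,\Re\eta_2,\Im\eta_2)$. Using $\Re a\Re b=\frac14(ab+a\bar b+\bar ab+\bar a\bar b)$ and its analogues, the covariance $\E WW^\top$ can be read off from the entries of $\E Z'Z'^*$: these are $\E\eta_1^2=\tau'$, $\E\abs{\eta_1}^2=1$, $\E\eta_1\eta_2=\abs\rho$, $\E\eta_1\bar\eta_2=\tau'\abs\rho$, and so on. A short computation gives $\E(\Re\eta_1)^2=\frac{1+\tau'}2$, $\E(\Im\eta_1)^2=\frac{1-\tau'}2$, $\E\Re\eta_1\Im\eta_1=0$, $\E\Re\eta_1\Re\eta_2=\frac{(1+\tau')\abs\rho}2$, $\E\Im\eta_1\Im\eta_2=-\frac{(1-\tau')\abs\rho}2$, and all cross real/imaginary terms vanish. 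The vanishing of the cross terms uses that $\tau'$ is real.

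Set $\mu=\frac{1+\tau'}2\in[0,1]$. Then $\E WW^\top$ is exactly the matrix in \Cref{a18} with correlation $\abs\rho$, up to the normalization convention of \cite{b5}, which should be checked against the factor conventions there. The case $\rho=0$ needs separate care, because $\tau$ is then unconstrained. Here we choose $\theta=\frac12\arg\tau$ instead, which makes $\tau'=\abs\tau$ real while the correlation stays $0$.

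Applying the Nguyen--O'Rourke theorem gives $\mu_Y\to\muel_{\abs\rho,1}$ almost surely. It remains to check the push-forward of the limit. Under $w\mapsto\sigma e^{i\theta}w$, the ellipse $\calE_{\sqrt{4\abs\rho},1+\abs\rho}$ maps to $\sigma\calE_{\sqrt{4\rho},1+\abs\rho}$, because the foci rotate by $\frac12\arg\rho$ and all lengths scale by $\sigma$. The uniform density $\frac1{\pi(1-\abs\rho^2)}$ becomes $\frac1{\pi\sigma^2(1-\abs\rho^2)}$, since $\H^2$ scales by $\sigma^2$. This is $\muel_{\rho,\sigma}$ as defined in \Cref{a17}. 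Finally, weak convergence is preserved under the continuous map $w\mapsto\sigma e^{i\theta}w$, which completes the argument.

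I expect the main obstacle to be the covariance bookkeeping in the \Cone-to-\Cref{a18} translation. In particular, one must verify that the phase choice makes $\tau'$ real and that the signs in the $\Im$ block match $-(1-\mu)\rho$.
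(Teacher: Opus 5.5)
Your proposal is correct and matches the paper's own justification: the paper obtains \Cref{a20} from Nguyen--O'Rourke's real-$\rho$, unit-variance theorem by the same rescaling and rotation $X\mapsto\sigma^{-1}e^{-\frac i2\arg\rho}X$, and your covariance bookkeeping (including the choice $\theta=\frac12\arg\tau$ when $\rho=0$) fills in details the paper leaves implicit. The normalization you flag needs no further checking: your diagonal entries $\frac{1\pm\tau'}2$ are exactly the entries $\mu$ and $1-\mu$ of \Cref{a18}.
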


\subsection{Logarithmic potential}\label{a14}
The logarithmic potential of a measure $\mu$ is the function $U^\mu:\C\to\R\cup\set{\pm\infty}$,
$$U^\mu(z):=-\int\log\abs{w-z}\wrt\mu(w).$$
Since $\log\abs{w-\cdot}$ is the Newtonian potential on the plane,
its distributional Laplacian $\Delta=4\partial_z\partial_{\bar z}$ recovers the measure,
\(-\frac1{2\pi}\Delta U^\mu(\cdot)=\mu(\cdot).\)
Furthermore, pointwise convergence of $U^\mu$ almost everywhere implies convergence of the corresponding measures in the vague topology. A proof of this fact is sketched by Tao \cite[Theorem 2.8.3]{b14} and is written fully here as \Cref{a21,a22}. In these lemmas, $\magn\cdot$ denotes the $L^2$ norm.

\begin{lemma}\label{a21}
For each compact set $K$, there exists a constant $C_K$ such that for any probability measure $\mu$, $$\magn{U^\mu\cdot\mathbf1_K}\le C_K\pare{1+\int_\C\log(1+\abs w)\wrt\mu(w)}.$$\end{lemma}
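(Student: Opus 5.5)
We prove the bound with Minkowski's integral inequality, which reduces it to a single uniform estimate on the $L^2(K)$ norm of $\log\abs{w-\cdot}$ as $w$ ranges over $\C$. Concretely, since $U^\mu(z)\mathbf 1_K(z)=-\int \log\abs{w-z}\mathbf 1_K(z)\wrt\mu(w)$, Minkowski's inequality gives
\[
\magn{U^\mu\cdot\mathbf 1_K}\le \int_\C f_K(w)\wrt\mu(w),\qquad f_K(w):=\Big(\int_K \log^2\abs{w-z}\wrt\H^2(z)\Big)^{1/2}.
\]
Measurability is not a concern, since $(w,z)\mapsto\log\abs{w-z}$ is Borel. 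If the right-hand side is infinite, the claim is vacuous. It therefore suffices to show $f_K(w)\le C_K(1+\log(1+\abs w))$ for all $w\in\C$, and then integrate against the probability measure $\mu$.

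To bound $f_K$, fix $R$ with $K\subseteq\set{\abs z\le R}$. For $z\in K$ we split $\log^2\abs{w-z}$ according to whether $\abs{w-z}\le1$ or $\abs{w-z}>1$.

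\textbf{Near part.} On $\set{z:\abs{w-z}\le1}$ we use
\[
\int_{\abs{w-z}\le1}\log^2\abs{w-z}\wrt\H^2(z)=2\pi\int_0^1 r\log^2 r\wrt r<\infty.
\]
This is a constant independent of $w$; it is the place where local integrability of $\log^2$ in two dimensions enters, and it is the only genuinely analytic point of the argument.

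\textbf{Far part.} On $\set{z\in K:\abs{w-z}>1}$ we have
\[
0<\log\abs{w-z}\le\log(\abs w+R)\le\log(1+\abs w)+\log(1+R),
\]
using $\abs w+R\le(1+\abs w)(1+R)$. This contributes at most
\[
\H^2(K)^{1/2}\big(\log(1+\abs w)+\log(1+R)\big)
\]
to $f_K(w)$.

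Combining the two parts with $\sqrt{a+b}\le\sqrt a+\sqrt b$ gives $f_K(w)\le C_K(1+\log(1+\abs w))$, with $C_K$ depending only on $R$ and $\H^2(K)$. This completes the plan.
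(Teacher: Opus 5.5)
Your proof is correct. It uses the same ingredients as the paper: Minkowski's integral inequality, local square-integrability of $\log\abs{\cdot}$ in the plane, and the bound $\log\abs{w-z}\le\log(1+\abs w)+\log(1+\abs z)$ when $\abs{w-z}\ge1$. The difference is in how the integral is split.

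The paper splits the \emph{measure}. It sets $K'=K+\disk(0,1)$ and writes $U^\mu=U_1+U_2$ according to whether $w\in K'$. It then applies Minkowski only to $U_1$, bounding it by $\mu(K')\sup_{w\in K'}\pare{\int_K\log^2\abs{w-z}\wrt z}^{1/2}$. The far piece $U_2$ is handled by a sup-norm bound over $z\in K$ times $\vol(K)^{1/2}$, using that $\abs{w-z}\ge1$ there.

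You instead apply Minkowski to the whole potential and, for each fixed $w$, split the \emph{$z$-domain} according to $\abs{w-z}\le1$. This gives the single estimate $f_K(w)\le C_K(1+\log(1+\abs w))$ for every $w\in\C$, which you then integrate against $\mu$.

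Your route is more self-contained. The near-part computation $2\pi\int_0^1 r\log^2 r\wrt r=\pi/2$ is exactly what justifies the paper's unproved claim that $\sup_{w\in K'}\int_K\log^2\abs{w-z}\wrt z$ is finite. You also never need the auxiliary set $K'$ or a second, sup-norm style argument.

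The paper's split gives slightly more than required: $U_2$ is actually bounded on $K$, not merely square-integrable. That extra information is not used anywhere downstream, so nothing is lost by your approach.
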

\begin{proof}
Set $K'=K+\disk(0,1)$. $\vol(\cdot)$ denotes the 2-dimensional Lebesgue measure on $\C$.
Decompose $U^\mu(z)=U_1(z)+U_2(z)$ with
\[U_1(z)=-\int_{K'}\log\abs{w-z}\wrt\mu(w),\quad
U_2(z)=-\int_{\C\backslash K'}\log\abs{w-z}\wrt\mu(w).\]
By the triangle inequality, $\magn{U^\mu\cdot\mathbf1_K}\le\magn{U_1\cdot\mathbf1_K}+\magn{U_2\cdot\mathbf1_K}$.
Apply Minkowski's inequality to $\pare{\int U_1(z)^2\wrt z}^{\frac12}$ to obtain
\[
\magn{U_1\cdot\mathbf1_K}
\le \int_{K'}\pare{\int_K\log^2\abs{w-z}\wrt z}^{1/2}\wrt\mu(w)
\le\mu(K')\sup_{w\in K'}\pare{\int_K\log^2\abs{w-z}\wrt z}^{1/2}
\le C_K\]
for some constant $C_K$ depending on $K$.
\spliteq{}{
\magn{U_2\cdot\mathbf1_K}
\le\vol(K)^{\frac12}\sup_{z\in K} \abs{U_2(z)}
  &=\vol(K)^{\frac12}\sup_{z\in K}\int_{\C\backslash K'}\log\abs{w-z}\wrt\mu(w)
\\&\le\vol(K)^{\frac12}\sup_{z\in K}\int_{\C\backslash K'}\log(\abs{w}+\abs z)\wrt\mu(w)
\\&\le\vol(K)^{\frac12}\sup_{z\in K}
\int_\C\pare{\log(1+\abs{w})+\log(1+\abs z)}\wrt\mu(w)
\\&\le\vol(K)^{\frac12}\pare{ \int_\C\log(1+\abs w)\wrt\mu(w) + \sup_{z\in K}\log(1+\abs z) }.}
Finally, observe that the second term in the integrand does not depend on $\mu$.
\end{proof}
\begin{lemma}\label{a22}
Let $\set{\mu_N}_{N\in\N}$ be a sequence of random probability measures and $f:\C\to\R$ a measurable function. Assume that
$\limsup_{N\to\infty}\int\log(1+\abs w)\mu_N(w)<\infty$ almost surely,
and that there is a deterministic set $S\subset\C$ of full Lebesgue measure such that $z\in S$ implies $U^{\mu_N}(z)\to f(z)$ almost surely. Then $\mu_N\to-\frac1{2\pi}\Delta f$ almost surely.
\end{lemma}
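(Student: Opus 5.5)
The plan is to go from pointwise almost-sure convergence on $S$ to almost-sure convergence in $L^2_{\mathrm{loc}}$ of $U^{\mu_N}$ to $f$, and then pass to the Laplacian by testing against smooth compactly supported functions. Since $\Delta$ is continuous on distributions, convergence $U^{\mu_N}\to f$ in $\Lloc1$ gives, for every $\varphi\in C_c^\infty(\C)$,
\[
\int\varphi\wrt\mu_N=-\frac1{2\pi}\int U^{\mu_N}\Delta\varphi\to-\frac1{2\pi}\int f\,\Delta\varphi .
\]
This is vague convergence of $\mu_N$ to the distribution $-\frac1{2\pi}\Delta f$.

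\textbf{Step 1: reduce to a single probability-one event.} Fubini is the key tool. The set $\{(z,\omega): U^{\mu_N(\omega)}(z)\to f(z)\}$ is jointly measurable, and for each $z\in S$ its $\omega$-section has probability one. Since $S$ has full Lebesgue measure, on an event $\Omega_0$ of probability one the set of $z$ where convergence fails is Lebesgue-null. We also intersect $\Omega_0$ with the event $\limsup_N\int\log(1+\abs w)\wrt\mu_N(w)<\infty$.

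\textbf{Step 2: uniform integrability.} Fix $\omega\in\Omega_0$ and a compact $K$. The quantity $M=\limsup_N\int\log(1+\abs w)\wrt\mu_N$ is finite. By \Cref{a21}, $\magn{U^{\mu_N}\mathbf 1_K}\le C_K(1+M+1)$ for all large $N$, so the family $\{U^{\mu_N}\mathbf 1_K\}$ is bounded in $L^2(K)$ and hence uniformly integrable on the finite-measure set $K$. Combined with almost-everywhere convergence to $f$, Vitali's convergence theorem gives $U^{\mu_N}\mathbf 1_K\to f\mathbf 1_K$ in $L^1$. Fatou's lemma also shows $f\in L^2(K)$. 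Since $K$ is arbitrary, $U^{\mu_N}\to f$ in $\Lloc1$, and the display above follows.

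\textbf{Step 3: upgrade vague to weak convergence.} Vague convergence must be upgraded to weak convergence, and the limit must be identified as a probability measure. The limit $-\frac1{2\pi}\Delta f$ is a positive distribution, since it is a vague limit of probability measures, and so it is a positive Radon measure $\mu$ with mass at most $1$. Tightness comes from the log-moment bound:
\[
\mu_N(\{\abs w>R\})\le\frac{\int\log(1+\abs w)\wrt\mu_N}{\log(1+R)},
\]
which is uniformly small for large $N$ once $R$ is large. Hence no mass escapes, $\mu$ is a probability measure, and the convergence is weak.

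The main obstacle I expect is Step 1, the measure-theoretic interchange. We need joint measurability of $(z,\omega)\mapsto U^{\mu_N(\omega)}(z)$, which holds if the random measures are measurable kernels. We also need to handle points where $U^{\mu_N}=+\infty$, which form a null set for each $\omega$ in $\Omega_0$.
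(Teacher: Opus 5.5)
Your proposal is correct and matches the paper's proof step for step. It uses the same Tonelli/Fubini argument to obtain one almost-sure event on which $U^{\mu_N}(z)\to f(z)$ for Lebesgue-a.e.\ $z$, the same $L^2(K)$ bound from \Cref{a21} together with Fatou, and the same tightness-from-log-moment upgrade from vague to weak convergence. The only difference is cosmetic: you invoke Vitali's theorem, whereas the paper proves the $\Lloc1$ convergence by hand, splitting into $\abs{U^{\mu_N}-f}\wedge M$ (dominated convergence) and a remainder bounded by $\frac1M\int_K\abs{U^{\mu_N}-f}^2$.
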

\begin{proof}
Set $f_N=U^{\mu_N}$. We condition the sequence $f_N$ on the intersection of two probability 1 events. The first event is that there is an $N_0$ large enough so that $\sup_{N\ge N_0}\int\log(1+\abs w)\wrt\mu_N(w)<\infty$, which is indeed probability 1 by the lemma hypothesis.
The second event is that $f_N(z)\to f(z)$ for almost all $z$. The hypothesis doesn't immediately state that this is probability 1; the hypothesis instead says $\Pr(f_N(z)\to f(z))=1$ for $z\in S$ where $S$ has full measure (in particular, the quantifier over $z$ is outside the probability). Nevertheless, observe that this implies
\[
\int_\C\Pr(f_N(z)\not\to f(z))\wrt z
=
\int_S\Pr(f_N(z)\not\to f(z))\wrt z
+
\int_{\C\backslash S}\Pr(f_N(z)\not\to f(z))\wrt z=0+0.\]
By Tonelli's theorem, this integral is equal to
\[\E\H^2(\set{z\in\C:f_N(z)\not\to f(z)})=0.\]
Since the area is nonnegative, this implies
\[\Pr\pare{f_N(z)\to f(z)\text{ for almost all }z } = 1.\]
as desired.

Since $\int\log(1+\abs w)\wrt\mu_N(w)$ is eventually uniformly bounded, the tail sequence $\mu_N$ is tight, so convergence of $\mu_N$ to $-\frac1{2\pi}\Delta f$ in the vague topology suffices. Since $\mu_N=-\frac1{2\pi}\Delta U^{\mu_N}$, $\mu_N$ converges to $-\frac1{2\pi}\Delta f$ in the vague topology if $U^{\mu_N}$ converges to $f$ in $\Lloc1$.
It suffices to argue for
any compact set $K\subset\C$ that
\[\int_K\abs{f_N(z)-f(z)}\wrt z\to0.\]
For any $M>0$,
\spliteq{}{
\int_K\abs{f_N(z)-f(z)}\wrt z
&\le
\int_K\pare{\abs{f_N(z)-f(z)}\wedge M}\wrt z
+\frac1M\int_K\abs{f_N(z)-f(z)}^2\wrt z
\\&\le
\int_K\pare{\abs{f_N(z)-f(z)}\wedge M}\wrt z
+\frac{2\magn{ f_N\cdot\mathbf1_K }^2+2\magn{ f\cdot\mathbf1_K }^2}M.}
The dominated convergence theorem implies that the first term converges to 0.  $\magn{f_N\cdot\mathbf1_K}$ is uniformly bounded by \Cref{a21}, which in turn implies $\magn{f\cdot\mathbf1_K}$ is uniformly bounded by Fatou's lemma. Taking the limit as $M\to\infty$ gives the result.
\end{proof}

The logarithmic potential is particularly convenient to work with when $\mu$ is the ESD of either a matrix or polynomial matrix. In particular, notice that $\det P(\cdot)$ is the monic polynomial with zeros at the $Nd$ eigenvalues of $P$, but on the other hand $\det P(z)=\det(z^d-Q(z))$ is the product of the $N$ eigenvalues of $Q(z)$ shifted by $z^d$. Namely,
\eq{\label{a23}
Nd\cdot U^{\mu_P}(z)
=-\log\abs{\det P(z)}
=N\cdot U^{\mu_{Q(z)}}(z^d).}
Our strategy is to argue pointwise convergence of $U^{\mu_P}$ via \cref{a23} borrowing central parts of the proof of the elliptic law \Cref{a20} from \cite{b5}.

The logarithmic potential of the ellipse
\(\sigma\calE_{\sqrt{4\rho},1+\abs\rho}=\calE_{\sqrt{4\sigma^2\rho},\sigma+\sigma\abs\rho}\) is
\eq{\label{a24}
U^{\rho,\sigma}(z):=U^{\muel_{\rho,\sigma}}(z)=\begin{dcases}
    \,\frac12-\frac{\sigma^2\abs z^2-\Re(\sigma^2\conj \rho z^2)}{2(\sigma^4-\sigma^4\abs\rho^2)}-\log\sigma & z\in\sigma\calE_{\sqrt{4\rho},1+\abs\rho}
    \\\,
    -\log\abs{\frac{z+\sqrt{z^2-4\sigma^2\rho}}2}-\frac12\Re\pare{\frac{z-\sqrt{z^2-4\sigma^2\rho}}{z+\sqrt{z^2-4\sigma^2\rho}}}  & z\not\in\sigma\calE_{\sqrt{4\rho},1+\abs\rho}
\end{dcases},}
with the branch chosen so that $\sqrt{z^2-4\sigma^2\rho}\approx z$ for large $\abs z$.
When $\sigma=0$, $U^{\rho,\sigma}(z)=-\log\abs z$. When $\sigma>0$ and $\abs{\rho}<1$, $U^{\rho,\sigma}$ is $C^1$ over $\R^2$ and when $\sigma>0$ and $\abs{\rho}=1$ it is $C^0$.
Pointwise convergence of $U^{\mu_P}$ comes from the following lemma.

\begin{lemma}\label{a25}
    If $X$ satisfies $\Cone$ with variance $\sigma^2$ and correlation $\rho$ for $\abs\rho<1$, then for each $z\in\C$, we have $U^{\mu_X}(z)\to U^{\rho,\sigma}(z)$, defined in \cref{a24}, almost surely.
\end{lemma}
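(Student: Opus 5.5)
The plan is to get pointwise convergence of the log-potential out of the Hermitization argument behind \Cref{a20}, as in Nguyen--O'Rourke \cite{b5}, and not out of weak convergence of $\mu_X$ alone. Weak convergence is not enough, because $\log\abs{\cdot}$ is unbounded near $z$ and at infinity.

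First dispose of the degenerate case $\sigma=0$. Then $X=0$ off the diagonal and the diagonal entries are i.i.d.\ with finite variance divided by $\sqrt N$. Write
\[
U^{\mu_X}(z)=-\frac1N\sum_i\log\abs{z-x_{ii}/\sqrt N}.
\]
For $z\neq0$ this converges almost surely to $-\log\abs z$, by a truncation/Borel--Cantelli argument: $\max_i\abs{x_{ii}}/\sqrt N$ need not be small, but the fraction of indices with $\abs{x_{ii}}>\eps\sqrt N$ tends to zero almost surely. One can also get this by rescaling and applying the singular-value framework below. The point $z=0$ is a null set and can be handled separately or ignored.

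For $\sigma>0$, use the identity
\[
U^{\mu_X}(z)=-\frac1N\log\abs{\det(X-z)}=-\frac1N\sum_{i}\log s_i(X-z)=-\int_0^\infty\log s\wrt\nu_{X-z}(s),
\]
where $\nu_{X-z}$ is the empirical singular value distribution of $X-z$. The key steps, in order, follow the proof of \cite[Theorem 1.8]{b5}.
\begin{enumerate}
\item \textbf{Singular value law.} For each fixed $z$, $\nu_{X-z}$ converges almost surely to a deterministic measure $\nu_z$. This is established in \cite{b5} via the Stieltjes transform of the Hermitization under \Cone. Since $\log$ is bounded and continuous on $[\eps,M]$, the truncated integrals converge.
\item \textbf{Large singular values.} $\int s^2\wrt\nu_{X-z}=\frac1N\smagn{X-z}_F^2$ is almost surely bounded by the strong law of large numbers, since the entries have finite variance. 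So the region $s>M$ contributes $o_M(1)$ uniformly in $N$.
\item \textbf{Small singular values.} Show that $\frac1N\sum_{s_i<\eps}\abs{\log s_i(X-z)}\to0$ as $\eps\to0$, almost surely and uniformly in large $N$. This needs two inputs: a polynomial lower bound on the least singular value $s_N(X-z)\ge N^{-C}$ with summable failure probability, and an intermediate-singular-value bound $s_{N-i}(X-z)\ge c\,i/N$ for $N^{1-\gamma}\le i\le N-1$. Both are proved in \cite{b5} for elliptic matrices, using only the finite second moment and the correlation structure.
\item \textbf{Identify the limit.} Uniform integrability of $\log$ then gives $U^{\mu_X}(z)\to-\int\log s\wrt\nu_z(s)$ almost surely. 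Since \Cref{a20} already asserts $\mu_X\to\muel_{\rho,\sigma}$, and the same Hermitization argument in \cite{b5} shows $-\int\log s\wrt\nu_z(s)=U^{\muel_{\rho,\sigma}}(z)$, the limit equals $U^{\rho,\sigma}(z)$ as computed in \cref{a24}. Alternatively, one can check this identity directly: $z\mapsto-\int\log s\wrt\nu_z$ is the limit of $L^1_{\text{loc}}$-bounded potentials whose Laplacian is $\muel_{\rho,\sigma}$, and it has $-\log\abs z$ growth at infinity, so Liouville forces equality with $U^{\rho,\sigma}$.
\end{enumerate}

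The main obstacle is step 3, controlling the smallest singular values of $X-z$ for a \emph{fixed} $z$ with summable (Borel--Cantelli) probability bounds. This is why the statement is pointwise in $z$, not uniform. I would not reprove these bounds. Instead I would cite the least-singular-value theorem for elliptic matrices with $\abs\rho<1$ and the intermediate-singular-value estimate from \cite{b5}, checking that their hypotheses are exactly \Cone. One reconciliation is needed: \cite{b5} uses real $\rho$ and unit $\sigma$, so apply their results to $\sigma^{-1}e^{-\frac i2\arg\rho}X$ at the correspondingly rotated and scaled point, and track the additive constant $-\log\sigma$.
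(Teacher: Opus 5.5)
Your outline matches the paper's proof in its main steps: Hermitization, convergence of $\nu_{X-z}$ from \cite{b5}, and uniform integrability of $\log$ from the singular-value estimates of \cite{b5}. The paper packages the last of these as bounds on $\int s^{\pm p}\wrt\nu_{zI-X}$ taken from the proof of \cite[Lemma 7.5]{b5}. The gap is in step 4: as you justify it, it does not reach \emph{every} $z$, and that is the point of the lemma. See the remark after it, and its use in \Cref{a8} at the single point $w=z^d$ for a matrix $Q(z)$ whose parameters change with $z$.

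Both of your identifications are distributional. The Hermitization argument of \cite{b5} gives $-\int\log s\wrt\nu_z(s)=U^{\muel_{\rho,\sigma}}(z)$ only for Lebesgue-a.e.\ $z$. The Liouville argument identifies $F(z):=-\int\log s\wrt\nu_z(s)$ with $U^{\rho,\sigma}$ only as elements of $\Lloc1$, i.e.\ off a null set that may contain the point you need.

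The paper avoids this by identifying the limit of $\nu_{zI-X}$ with the explicit family $\nuel_{z,\rho}$. It uses the Gaussian case \cite[Lemma 7.17]{b5} plus universality \cite[Lemma 7.14]{b5}, observing that those proofs, though stated for a.e.\ $z$, work at every $z$. Your step 1 citation needs the same observation. For this explicit family, $U^{\rho,1}(z)=-\int\log s\wrt\nuel_{z,\rho}(s)$ holds at every $z$ \cite{b15,b5}.

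Your route can be repaired by a semicontinuity argument.
\begin{enumerate}
\item \emph{Lower bound.} Since $U^{\mu_X}$ is superharmonic, $U^{\mu_X}(z)\ge\frac1{\pi r^2}\int_{\disk(z,r)}U^{\mu_X}$. The a.e.\ identification plus the argument of \Cref{a22} gives a.s.\ $\Lloc1$ convergence $U^{\mu_X}\to U^{\rho,\sigma}$. Hence $F(z)\ge\frac1{\pi r^2}\int_{\disk(z,r)}U^{\rho,\sigma}$, and letting $r\to0$ gives $F(z)\ge U^{\rho,\sigma}(z)$.
\item \emph{Upper bound.} The inequality $\abs{s_i(X-z)-s_i(X-z')}\le\abs{z-z'}$ makes $z\mapsto\nu_z$ weakly continuous with locally bounded second moments. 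Since $\log$ is upper semicontinuous and bounded above on compacts, $F$ is lower semicontinuous. Approaching $z$ through points where $F=U^{\rho,\sigma}$ and using continuity of $U^{\rho,\sigma}$ gives $F(z)\le U^{\rho,\sigma}(z)$.
\end{enumerate}

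Two small corrections in the case $\sigma=0$:
\begin{itemize}
\item Finite variance does force $\max_{i\le N}\abs{x_{ii}}/\sqrt N\to0$ a.s., by Borel--Cantelli with $\sum_i\Pr(\abs{x_{ii}}^2>\eps^2 i)<\infty$. This is exactly what the paper uses.
\item You cannot ignore $z=0$, since the lemma is stated for each $z$. It follows at once, though: the spectral radius tending to $0$ forces $U^{\mu_X}(0)\to+\infty=U^{\rho,0}(0)$.
\end{itemize}
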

\begin{remark}
    It is important that this holds for {\em all} $z$, and not just for {\em almost} all $z$.
\end{remark}

The proof of \Cref{a25} simply combines some lemmas of \cite{b5}. To import these lemmas, we need to introduce the \textit{Hermitized} spectral distribution (EHSD) $\nu_X$, which is the density on the real line
\[
\nu_X((-\infty,a])=\frac{\#\set{j\in[N]:\sigma_j(X)\le a}}{N}.
\]
Since the absolute determinant is the product of the singular values and also the product of the absolute eigenvalues, we have the key relationship between $\mu_X$ and $\nu_X$,
\eq{\label{a26}N\cdot U^{\mu_X}(z)=-\log\abs{\det(zI-X)}=-N\cdot\int_0^\infty\log(s)\wrt\nu_{zI-X}(s).}

\begin{proof}[Proof of \Cref{a25}]
When $\sigma^2=0$, $X$ is diagonal with entry-wise variance $O(1/N)$; in particular, the spectral radius almost surely converges to 0, so $U^{\mu_X}(z)\to-\log\abs z$ as required. Now assume $\sigma^2>0$. Notice that $\frac1\sigma e^{-\frac i2\arg\rho} X$ satisfies $\Cone$ with variance $1$ and correlation $\abs\rho$, so by replacing $X$ with $\frac1\sigma e^{-\frac i2\arg\rho}X$ it suffices to consider $\sigma=1$ and $\rho\in[0,1)$. There is a collection of measures $\set{\nuel_{z,\rho}}_{z\in\C}$ such that
\eq{\label{a27}U^{\rho,1}(z)=-\int_0^\infty\log(s)\wrt \nuel_{z,\rho}(s)}
for all $z\in\C$, see \cite{b15,b5}. Furthermore, when the entries of $X$ are Gaussian, \cite[Lemma 7.17]{b5} states for almost every $z\in\C$ that $\nu_{zI-X}\to\nuel_{z,\rho}$ almost surely; the given proof holds for any $z\in\C$, which is the version we use.
Next, \cite[Lemma 7.14]{b5} explicitly states that the limiting value of $\nu_{zI-X}$ depends only on the first two moments of $X$ for almost all $z\in\C$, and again the given proof holds for any $z\in\C$.
This shows $\nu_{zI-X}\to\nuel_{z,\rho}$ for all $z$ and any $X$ satisfying $\Cone$ with variance 1 and correlation $\rho$.

Next, the proof of 
\cite[Lemma 7.5]{b5} states that for every $z\in\C$ there is some $p>0$ such that
\[\limsup_{N\to\infty}\int_0^\infty s^p\wrt\nu_{zI-X}(s)<\infty,\qand
\limsup_{N\to\infty}\int_0^\infty s^{-p}\wrt\nu_{zI-X}(s)<\infty\]
almost surely. Since there is some constant $C_p$ with $\log^2(s)\le C_p\max(s^p,s^{-p})$, this means $$\limsup_{N\to\infty}\int_0^\infty\log^2(s)\wrt\nu_{zI-X}<\infty$$ almost surely as well. This establishes that $\log(\cdot)$ is almost surely uniformly integrable with respect to the tail sequence $\set{\nu_N}_{N>N_0}$ for sufficiently large $N_0$. Therefore, with \Cref{a26,a27},
\[\lim_{N\to\infty}U^{\mu_X}(z)=-\lim_{N\to\infty}\int_0^\infty \log(s)\wrt\nu_{zI-X}(s)=-\int_0^\infty \log(s)\wrt\nuel_{z,\rho}(s)=U^{\rho,1}(z)\]

\end{proof}

With this in place, we are now ready to tackle the Petaloid law.

\subsection{Petaloid law}\label{a15}

\begin{definition}[elliptic polynomial matrices / C1-P]
\label{a9}
$P$ is an elliptic polynomial matrix if it is of the form
\eq{\label{a7}P\at N(z)=z^d-Q\at N(z),\qwhere Q\at N(z)=\sum_{\ell=0}^{d-1}A\at N_\ell z^\ell,\quad (A\at N_\ell)_{ij}=\frac1{\sqrt N}x_{ij\ell}}
for a triple array $\set{x_{ij\ell}}_{i,j\in\N,\ell\in\set{0,\ldots,d-1}}$ of (coupled) random elements
such that $Q(z)$ is elliptic for each $z\in\C$.
Correspondingly, $P$ satisfies C1-P if $Q(z)$ satisfies C1 (\Cref{a19}) for each $z\in\C$.
\end{definition}

A straightforward sufficient condition for $P$ to be elliptic is the following, analogous to \Cref{a16}. Let $(q_1,q_2)$ be a random pair of centered degree at most $d-1$ polynomials with $\E\abs{q_1(z)}^2=\E\abs{q_2(z)}^2<\infty$ for all $z$. Let $\set{f_{ij}}_{i,j\in\N}$ be an infinite double array of polynomials. For each $N\in\N$, let $Q\at N\in\C[z]^{N\times N}$ with entries $Q_{ij}\at N=\frac1{\sqrt N}f_{ij}$. Then $P$ for $P\at N(z)=z^d-Q\at N(z)$ is indeed elliptic if
\begin{enumerate}
    \item The collection $\set{f_{ii} : i\in\N}\cup\set{(f_{ij},f_{ji}):i,j\in\N, i<j}$ consists of mutually independent elements.
    \item $f_{ii}$ for $1\le i$ are copies of a random polynomial of degree at most $d-1$ with mean 0 and finite variance and
    \item $(f_{ij},f_{ji})$ for $1\le i<j$ are copies of $(q_1,q_2)$.
\end{enumerate}
\noindent Furthermore, C1-P is satisfied if there are polynomials $\sigma^2(\cdot,\cdot)$ and $g(\cdot)$ such that
\eq{\label{a28}
\E ZZ^*=\pmat{ \sigma^2(z,\bar z) & g(z) & 0 & 0 \\ \conj{g(z)} & \sigma^2(z,\bar z) & 0 & 0 \\ 0 & 0 & \sigma^2(z,\bar z) & \conj{g(z)} \\ 0 & 0 & g(z) & \sigma^2(z,\bar z) }\qwhere Z=\pmat{ q_1(z) & \conj{q_2(z)} & \conj{q_1(z)} & q_2(z) }^\top.}

Even in the context of the full \Cref{a9}, these polynomials $\sigma^2$ and $g$ will be helpful to define. For any $i<j$,
\spliteq{\label{a29}}{
\sigma^2(z,\bar z)
&=N\E\abs{Q_{ij}(z)}^2
=N\E\abs{Q_{ji}(z)}^2
\\
g(z)
&=N\E\pare{Q_{ij}(z)Q_{ji}(z)}
\\
\rho(z,\bar z)&=\frac{g(z)}{\sigma^2(z,\bar z)}
.}

\begin{remark}\label{a30}
    \Cref{a9} is technically more general than the subsequent characterization:
    consider the case of $d=2$ where $A_0$ and $A_1$ are Ginibre matrices coupled such that $(A_0)_{12}=\overline{(A_1)_{13}}$ and $(A_1)_{12}=-\overline{(A_0)_{13}}$. Then $Q(z)_{12}$ and $Q(z)_{13}$ are i.i.d. Gaussian scalars for each $z$ despite $Q_{12}$ and $Q_{13}$ not being independent as polynomials.
    However, as is made explicit in \Cref{a31}, the additional freedom garnered by coupling entries in this way does not result in different limiting spectral behavior. Similarly, no distinct behavior is seen by considering $\tau\neq0$, i.e., nonzero off-diagonal blocks in \Cref{a28}. Our theorem will show that the limiting ESD of $P$ depends only on the values of $\sigma^2$ and $g$, which are indeed unaffected by correlations between entries of $P$ and are the least constrained when the off diagonal blocks in \Cref{a28} are 0.\end{remark}

In order to apply \Cref{a22}, we must show that the tail of $\mu_P$ has a uniformly bounded logarithmic moment. The following lemma supplies this.

\begin{lemma}[Bounded log-moment]\label{a32}
If $P$ is elliptic, then the variance of the tail of $\set{\mu_{P\at N}}_{N\in\N}$ is almost surely uniformly bounded. In particular,
\[\limsup_{N\to\infty}\int\log\pare{1+\abs w}\wrt\mu_{P\at N}<\infty\] almost surely.
\end{lemma}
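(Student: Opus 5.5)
The plan is to bound the second moment $\int\abs w^2\wrt\mu_{P\at N}(w)$ through the companion linearization $L_P$, whose eigenvalues are exactly the generalized eigenvalues of $P$. The ``variance'' of $\mu_{P\at N}$ is controlled by this second moment. The log-moment bound then follows from $\log(1+\abs w)\le \abs w\le 1+\abs w^2$ and Jensen.

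\textbf{Step 1 (Schur).} By Schur's inequality, the eigenvalues $\lambda_1,\dots,\lambda_{Nd}$ of $L_P$ satisfy
\[
\int\abs w^2\wrt\mu_{P\at N}(w)=\frac1{Nd}\sum_{j=1}^{Nd}\abs{\lambda_j}^2\le\frac1{Nd}\smagn{L_P}_F^2
=\frac1{Nd}\Big(N(d-1)+\sum_{\ell=0}^{d-1}\smagn{A_\ell\at N}_F^2\Big).
\]
The identity blocks contribute $N(d-1)$. The top block row contributes $\sum_\ell\smagn{A_\ell\at N}_F^2=\frac1N\sum_\ell\sum_{i,j\le N}\abs{x_{ij\ell}}^2$.

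\textbf{Step 2 (finite variances of the $x_{ij\ell}$).} Ellipticity of $Q(z)$ at the points $z=0$ and $z=1,\dots,d-1$ gives finite second moments of the entries of $Q(z)$ at these $d$ distinct points. Inverting the Vandermonde matrix then shows that each coefficient $x_{ij\ell}$ has finite variance. For $i\ne j$ this variance is the same across pairs, since the off-diagonal entries of each $Q(z)$ are copies of a fixed $\xi_1$ (or $\xi_2$). Likewise the diagonal variance is common across $i$.

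\textbf{Step 3 (SLLN).} It remains to show that $\frac1{N^2}\sum_{i,j\le N}\abs{x_{ij\ell}}^2$ is almost surely bounded. Independence does not hold across $\ell$, but within a fixed $\ell$ the family $\{x_{ii\ell}\}\cup\{(x_{ij\ell},x_{ji\ell})\}_{i<j}$ is independent. This holds because it is a measurable function of the entries of $Q(z)$ at the $d$ interpolation points, and the pairs $(Q_{ij},Q_{ji})$ are independent across $i<j$.

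This independence needs care. Definition~\ref{a9} only gives independence of the entries of $Q(z)$ for each single $z$, not jointly over several $z$ (cf.\ Remark~\ref{a30}). I would avoid relying on joint independence, and instead handle the sums with an SLLN for triangular arrays of identically distributed, pairwise-independent-at-each-$z$ variables. The cleanest device is to sum over $\ell$ directly. For each fixed $z$, $\frac1{N^2}\sum_{i,j}\abs{x_{ij}(z)}^2$ with $x_{ij}(z)=\sum_\ell x_{ij\ell}z^\ell$ is an average of an independent (within the single-$z$ structure) double array with finite mean. By the standard SLLN for double arrays (Etemadi's argument: take $N_k=\lfloor\alpha^k\rfloor$, apply Chebyshev after truncation, and interpolate using positivity) it converges almost surely to $\sigma^2(z,\bar z)$.

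\textbf{Step 4 (recover coefficients).} Apply Step 3 at the $d$ points $z=0,1,\dots,d-1$. Then
\[
\sum_\ell\abs{x_{ij\ell}}^2\le C_d\sum_{k=0}^{d-1}\abs{x_{ij}(k)}^2
\]
by Vandermonde invertibility. This gives $\limsup_N\frac1N\sum_\ell\smagn{A_\ell\at N}_F^2<\infty$ almost surely, and hence the second moment in Step 1 is almost surely eventually bounded.

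The main obstacle is Step 3: the almost sure law of large numbers for the double array of squared entries under only a finite second moment. The summands are nonnegative, so Etemadi's subsequence-and-monotonicity trick should suffice. For the diagonal terms, $\frac1{N^2}\sum_{i\le N}\abs{x_{ii}(z)}^2\to0$ by the ordinary SLLN.
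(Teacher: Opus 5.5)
Your proof is correct and follows the paper's route: you bound the second moment of $\mu_{P^{(N)}}$ by $\frac{1}{Nd}\|L_P\|_F^2$, then control $\sum_\ell\|A_\ell^{(N)}\|_F^2$ through $\|Q^{(N)}(z)\|_F^2$ at $d$ fixed points, and handle each point with the strong law of large numbers using only the per-$z$ ellipticity. The one difference is the choice of points. The paper evaluates at the $d$-th roots of unity, where Parseval gives the exact identity $\sum_\ell\|A_\ell\|_F^2=\frac1d\sum_k\|Q(\omega^k)\|_F^2$. Your Vandermonde inversion at $0,1,\dots,d-1$ gives only an inequality with a constant $C_d$, which works equally well. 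You were right to drop the unjustified joint-independence claim in favor of this per-$z$ argument.
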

\begin{proof}
Consider the $Nd\times Nd$ matrix
\[
L=
\begin{bmatrix}
A\at N_{d-1} & A\at N_{d-2} & \cdots & A\at N_1 & A\at N_0\\
I & 0 & \cdots & 0 & 0\\
0 & I & \ddots & 0 & 0\\
\vdots & \vdots & \ddots & 0 & 0\\
0 & 0 & \cdots & I & 0
\end{bmatrix}
\]
This is the companion linearization of $P$, and the standard eigenvalues of $L$ are exactly the generalized eigenvalues of $P$. By Weyl majorization and the discrete Fourier transform, for $\omega$ a primitive $d\toth$ root of unity,
\[\int_\C\abs{w}^2\wrt\mu_{P\at N}(w)\le\frac1{Nd}\magn{L}_F^2=
\frac{d-1}d+\frac1{Nd}\sum_{\ell=0}^{d-1}\magn{A\at N}_F^2=
\frac{d-1}d+\frac1{Nd^2}\sum_{\ell=0}^{d-1}\magn{Q\at N(\omega^\ell)}_F^2
.\]
Since $Q\at N(\omega^\ell)$ is elliptical for each $\ell$, each term almost surely converges by the strong law of large numbers. Since there are finitely many terms, the sum converges as well.
\end{proof}

\begin{theorem}[Petaloid law]\label{a8}
Say $P$ is degree $d$ and satisfies \textnormal{C1-P} (\Cref{a9}). Define $\sigma^2$ and $g$ by \Cref{a29}. Let
\[\calE_z=\calE_{\sqrt{4g(z)},\sqrt{\sigma^2(z,\bar z)}+\abs{g(z)}/\sqrt{\sigma^2(z,\bar z)}}.\]
Then
\[
\mu_P\to-\frac1{2\pi d}\Delta\pare{z\mapsto
\begin{dcases}
    \,\frac12-\frac{\sigma^2(z,\bar z)\abs z^{2d}-\Re(\conj{g(z)}z^{2d})}{2(\sigma^2(z,\bar z)^2-\abs{g(z)}^2)}-\frac12\log\sigma^2(z,\bar z) & z^d\in\calE_z
    \\\,
    -\log\abs{\frac{z^d+\sqrt{z^{2d}-4g(z)}}2}-\frac12\Re\pare{\frac{z^d-\sqrt{z^{2d}-4g(z)}}{z^d+\sqrt{z^{2d}-4g(z)}}} & z^d\not\in\calE_z
\end{dcases}}.\]
\end{theorem}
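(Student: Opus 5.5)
The plan is to apply \Cref{a22} to the sequence $\mu_N=\mu_{P\at N}$, with $f$ the function inside the Laplacian divided by $d$.

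\emph{Log-moment hypothesis.} The bound $\limsup_{N\to\infty}\int\log(1+\abs w)\wrt\mu_{P\at N}(w)<\infty$ almost surely is exactly \Cref{a32}. So it remains to exhibit a deterministic full-measure set $S$ on which $U^{\mu_P}(z)\to f(z)$ almost surely.

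\emph{Pointwise convergence.} By \cref{a23}, for each fixed $z$,
\[
U^{\mu_P}(z)=\frac1d\,U^{\mu_{Q(z)}}(z^d).
\]
For fixed $z$, C1-P says $Q(z)$ satisfies C1 with variance $\sigma^2(z,\bar z)$ and correlation $\rho(z,\bar z)=g(z)/\sigma^2(z,\bar z)$. Take $S$ to be the set of $z$ with $\abs{g(z)}<\sigma^2(z,\bar z)$, so that $\abs{\rho(z,\bar z)}<1$.

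For $z\in S$, \Cref{a25} applies to the single matrix sequence $Q(z)$ at the single point $z^d$. Because $z^d$ is a specific point, we genuinely need the ``for all points'' version stressed in the remark after \Cref{a25}. It gives $U^{\mu_{Q(z)}}(z^d)\to U^{\rho(z),\sigma(z)}(z^d)$ almost surely.

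\emph{Matching the formula.} Substitute $\sigma^2=\sigma^2(z,\bar z)$ and $\sigma^2\rho=g(z)$ into \cref{a24}.
\begin{itemize}
\item The ellipse $\sigma\calE_{\sqrt{4\rho},1+\abs\rho}$ becomes $\calE_{\sqrt{4g},\,\sigma+\abs g/\sigma}=\calE_z$.
\item The interior expression becomes
\[
\frac12-\frac{\sigma^2\abs z^{2d}-\Re(\conj g z^{2d})}{2(\sigma^4-\abs g^2)}-\frac12\log\sigma^2 .
\]
\item The exterior expression involves $\sqrt{z^{2d}-4g}$.
\end{itemize}
Together these give exactly $d\cdot f(z)$.

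\emph{The main obstacle: showing $S$ has full Lebesgue measure.} By Cauchy--Schwarz, $\abs{g}\le\sigma^2$ always holds, so the complement of $S$ is the set $\{\sigma^2(z,\bar z)=\abs{g(z)}\}$. This is the zero set of the real polynomial $\sigma^4-\abs g^2$ in $(\Re z,\Im z)$. A real polynomial's zero set is Lebesgue-null unless the polynomial vanishes identically, so it suffices to rule out $\sigma^4\equiv\abs g^2$.

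\begin{itemize}
\item \emph{Generic case.} If $\sigma^4-\abs g^2\not\equiv0$, then $S$ has full measure and we are done.
\item \emph{Case $\sigma^2\equiv0$.} Then $Q$ is diagonal with vanishing variance. The $\sigma=0$ branch of the proof of \Cref{a25} gives $U\to-\log\abs{z^d}$ everywhere, which is the stated formula read in the degenerate sense.
\item \emph{Case $\abs g\equiv\sigma^2\ne0$.} Here the correlation has modulus one everywhere, a Hermitian-like degenerate situation. I would either exclude it as part of the C1 hypothesis, which requires $\abs\rho<1$ in \Cref{a20,a25}, or note that $\abs{g(z)}=\sigma^2(z,\bar z)$ for a holomorphic $g$ forces strong rigidity. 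In that case I would treat it by approximation, using continuity of $U^{\rho,\sigma}$ in $\rho$.
\end{itemize}
The cleanest plan is to state that C1-P, which requires $Q(z)$ to satisfy C1 with $\abs\rho<1$ for each $z$, already forces $S=\C$ or at least a full-measure $S$.

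\emph{Conclusion.} With both hypotheses of \Cref{a22} verified, we get $\mu_P\to-\frac1{2\pi}\Delta f$ almost surely, which is the claimed limit with the factor $\frac1d$.
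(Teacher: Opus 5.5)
Your generic case matches the paper's first case: reduce via \Cref{a22} and \Cref{a32}, use $U^{\mu_P}(z)=\frac1d U^{\mu_{Q(z)}}(z^d)$, and apply the all-points version of \Cref{a25} off the null zero set of $\sigma^4-\abs g^2$. Your handling of $\sigma^2\equiv0$ is also fine. There is, however, a genuine gap: the case $\abs{g}\equiv\sigma^2\not\equiv0$ is left unproven.

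\textbf{Why your proposed fixes fail.} Your main fix, that C1-P already forces $\abs\rho<1$, is false. \Cref{a19} only requires ellipticity, which allows $\abs\rho\le1$, and the remark after it notes that GUE satisfies C1 with $\rho=1$. The restriction $\abs\rho<1$ is an extra hypothesis of \Cref{a20,a25}, not part of C1. The bad case genuinely occurs within the theorem's scope. Examples are $d=1$ with $A_0$ GUE, which is the semicircle law the theorem is meant to generalize, or $Q(z)=c(z)B$ for a single GUE matrix $B$. In both, $\abs{g}\equiv\sigma^2$ on all of $\C$, so your set $S$ is empty and \Cref{a25} gives nothing. Your fallback, approximating via continuity of $U^{\rho,\sigma}$ in $\rho$, does not work either. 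Continuity of the \emph{limiting} potential says nothing about $U^{\mu_{Q(z)}}(z^d)$ for the actual matrices, whose correlation is exactly unimodular. Interchanging the $N\to\infty$ and $\rho\to\rho_0$ limits would need $N$-uniform control of $\frac1N\log\abs{\det}$ under perturbation. That is precisely the small-singular-value issue that makes pointwise convergence delicate, especially when $z^d$ may lie on the support.

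\textbf{What the paper does instead.} In the case $\abs g\equiv\sigma^2$, the paper chooses a different null exceptional set:
$E=\set{z: g(z)=0\text{ or }\Im(z^d/\sqrt{g(z)})=0}$.
These are the points where $z^d$ lies on the line carrying the spectrum. For $z\notin E$, split $Q(z)=\wt Q(z)+D_z$ with $D_z$ diagonal. Then $e^{-\frac i2\arg g(z)}\wt Q(z)$ is Hermitian, so
$\smagn{(z^d-\wt Q(z))^{-1}}\le\abs{\Im(e^{-\frac i2\arg g(z)}z^d)}^{-1}$,
a bound independent of $N$. Since $\magn{D_z}\to0$ almost surely, the determinant perturbation bound gives $U^{\mu_{Q(z)}}(z^d)-U^{\mu_{\wt Q(z)}}(z^d)\to0$. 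The limit is then identified with the semicircle law for $\wt Q(z)$ plus uniform integrability of $\log\abs{w-z^d}$. This function is bounded below because $z^d$ is separated from the line, and controlled above by $\frac1N\magn{Q(z)}_F^2\to\sigma^2$. Without an argument of this kind, your proof only covers polynomial matrices with $\sigma^4-\abs g^2\not\equiv0$.
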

\begin{proof}
By \Cref{a24}, the function inside the parenthesis on the right hand side is $z\mapsto U^{\rho(z,\bar z),\sigma(z,\bar z)}(z^d)$. 
By \Cref{a22} and \Cref{a32}, it suffices to show that there is a measure 0 set $E$ such that for every $z\in\C\backslash E$ we have
\[U^{\mu_P}(z)\to\frac 1dU^{\rho(z,\bar z), \sigma(z,\bar z)}(z^d)\]
almost surely. By \Cref{a23}, $U^{\mu_P}(z)=\frac1dU^{Q(z)}(z^d)$.
$\abs{g(z)}^2$ and $\sigma^2(z,\bar z)^2$ are both polynomials in $z$ and $\bar z$, so the set $S=\set{z\in\C:\abs{g(z)}=\sigma^2(z,\bar z)}$ is either $\C$ or else has measure 0.
 
Suppose $S$ has measure 0. Set $E=S$. For any $z\in\C\backslash E$, we have $\abs{g(z)}<\sigma^2(z,\bar z)$ and so $Q(z)$ is elliptic with absolute correlation strictly less than 1. Thus, \Cref{a25} gives $U^{Q(z)}(w)\to U^{\rho(z,\bar z), \sigma(z,\bar z)}(w)$ for all $w\in\C$; in particular we may take $w=z^d$.

Suppose $S=\C$. Set $E=\set{z\in\C:g(z)=0\text{ or }\Im(z^d/\sqrt{g(z)})=0}$, which has measure 0 since $z\mapsto z^d/\sqrt{g(z)}$ and $g$ are meromorphic. Fix any $z\in\C\backslash E$. Note $Q(z)$ is elliptic with unit absolute correlation. Decompose $Q(z)=\wt Q(z)+D_z$ where $D_z$ is diagonal and the diagonal entries of $\wt Q(z)$ are 0.
This way, $e^{-\frac i2\arg g(z)}\wt Q(z)$ is a Hermitian matrix with probability 1.
Then
\spliteq{}{
    \abs{U^{\mu_{Q(z)}}(w)-U^{\mu_{\wt Q(z)}}(w)}
      &=\frac{\abs{\log\det(w-Q(z))-\log\det(w-\wt Q(z))}}N
    \\&=\frac{\abs{\log\det\pare{I - D_z(w-\wt Q(z))^{-1}}}}N
    \\&\le\log\pare{\frac1{1 - \magn{D_z}\smagn{(w-\wt Q(z))^{-1}} }}
    \\&\le\log\pare{\frac1{1 - \magn{D_z}\Im(e^{-\frac i2\arg g(z)}w)^{-1} }}
    .}
By assumption, $\Im(e^{-\frac i2\arg g(z)}z^d)=\abs{g(z)}^{\frac12}\Im(z^d/\sqrt{g(z)})\neq0$, so the inverse in the denominator is well-defined and independent of $N$ for $w=z^d$.
By the strong law of large numbers, $\magn{D_z}\to0$ almost surely so $\lim_{N\to\infty}U^{\mu_{Q(z)}}(z^d)=\lim_{N\to\infty}U^{\mu_{\wt Q(z)}}(z^d)$.

To control the limit of $U^{\mu_{\wt Q(z)}}(z^d)$ we employ the semi-circular law. \cite[Theorem~2.5]{b16} states $\mu_{\wt Q(z)}\to\muel_{\rho(z,\bar z),\sigma(z,\bar z)}$. To retain convergence after integrating $\log\abs{w-z^d}$ against these measures, we need to control both the positive and negative contributions. $z^d$ is separated from the spectrum of $\wt Q(z)$, so $\log\abs{w-z^d}$ is uniformly lower bounded for $w$ in the spectrum of $\wt Q(z)$. The positive contribution of $\log\abs{w-z^d}$ is bounded since $\log\abs\cdot$ is sub-quadratic and, by Weyl's inequality, $\int\abs w^2\wrt\mu_{\wt Q(z)}\le\frac1N\magn{Q(z)}_F^2\to\sigma^2(z,\bar z)<\infty$ almost surely. Thus $\lim_{N\to\infty}U^{\mu_{\wt Q(z)}}(z^d)=U^{\rho(z,\bar z),\sigma(z,\bar z)}(z^d)$.

\end{proof}

The theorem in its above form may be unsatisfactory as it involves computing a distributional Laplacian. This does not amount to simply computing $\partial_x^2+\partial_y^2$ since the potential need not be $C^2$, i.e., one must really compute the Laplacian in the distributional sense. In fact, $\mu_P$ may approach a measure with a zero dimensional, one dimensional, and two dimensional component. To illustrate why, consider the polynomial
\[P(z)=z^3-Q(z),\qwhere Q(z)=z^2A_2+zA_1\]
where $A_1$, $A_2$ are independent GUE matrices. Since $P(0)=0$, $0$ is deterministically an eigenvalue and in fact has multiplicity $N$ so contributes $\frac{N}{3N}=\frac13$ mass to the limiting spectral measure. We can then ``deflate'' the polynomial, setting $\wt P(z)=z^2-\wt Q(z)$, $\wt Q(z)=zA_2+A_1$, to identify the remaining eigenvalues. When $z\in\R$, $\wt Q(z)$ is a multiple of a GUE matrix and in particular has all real eigenvalues. Consider what happens when one sweeps from $z=-\infty$ to $z=\infty$ along $\R$.
For both large and small $z$, the eigenvalues of $\wt P(z)$ are all positive since $\wt Q(z)=O(z)$ is dominated by $z^2$, and when $z=0$, $\wt P(0)=-A_1$, which has about half of its eigenvalues negative by the semi-circular law (that's about $N/2$ of them). 
Since the eigenvalues of $P(z)$ are continuous in $z$, each negative eigenvalue of $P(0)$ corresponds to two eigenvalues of $P$, one positive real and one negative real. This results in about $N$ additional real eigenvalues. 
The remaining $\approx N$ eigenvalues are scattered in a two dimensional region in the complex plane.
\begin{corollary}\label{a10}
Say $P$ is degree $d$ and satisfies \textnormal{C1-P} (\Cref{a9}). Let $A_\ell$ be the matrices in \Cref{a7}. Say the off diagonal entries of $A_\ell$ are almost surely zero for $\ell<m$, and of $A_m$ are nonzero with positive probability.
Define $\sigma^2$ and $g$ by \Cref{a29} and set
\[\wt\sigma^2(z,\bar z)=\abs z^{-2m}\sigma^2(z,\bar z),\qand
\wt g(z)=z^{-2m}g(z).\]
These are polynomials. Set
\spliteq{}{
\shape_\oned
&
=\set{z\in\C:
\begin{aligned}
\abs{\wt g(z)}&=\wt\sigma^2(z,\bar z)
\\
\abs{z^{d-m}-\sqrt{4\wt g(z)}}+\abs{z^{d-m}+\sqrt{4\wt g(z)}} &=2\sqrt{4\abs{\wt g(z)}}
\end{aligned}},
\\
\shape_\twod&=\set{z\in\C:
\begin{aligned}
\abs{\wt g(z)}&<\wt\sigma^2(z,\bar z)
\\
\abs{z^{d-m}-\sqrt{4\wt g(z)}}+\abs{z^{d-m}+\sqrt{4\wt g(z)}} &\le2\wt\sigma^2(z,\bar z)^{\frac12}+2\abs{\wt g(z)}/\wt \sigma^2(z,\bar z)^{\frac12}
\end{aligned}
}.
}
Then $\mu_P\to\mu_\zerod+\mu_\oned+\mu_\twod$ almost surely where
\spliteq{}{
\mu_\zerod
&=\frac md\delta_0
\\
\mu_\oned
&=\frac{\abs z^{d-m-1}\abs{2(d-m)\wt g(z)-z\wt g'(z)}\sqrt{\abs{4\wt g(z)-z^{2d-2m}}}}{4d\pi\abs{\wt g(z)}^2}\cdot\H^1\big|_{\A_\oned}.
\\
\mu_{\twod}
&=\frac1{4d\pi}\Delta\pare{z\mapsto\dfrac{\wt \sigma^2(z,\bar z)\abs z^{2d-2m}-\Re(\overline{\wt g(z)}z^{2d-2m})}{\wt \sigma^2(z,\bar z)^2-\abs{\wt g(z)}^2}
    +\log\wt\sigma^2(z,\bar z)}\cdot\H^2\big|_{\shape_\twod},
}
$\mu_\oned$ and $\mu_\twod$ are absolutely continuous densities on their respective supports and the Laplacian in $\mu_\twod$ can be interpreted in the usual, non-distributional, sense.
\end{corollary}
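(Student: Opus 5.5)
The plan is to start from \Cref{a8}, which says $\mu_P\to-\frac1{2\pi d}\Delta F$, where $F(z)=U^{\rho(z,\bar z),\sigma(z,\bar z)}(z^d)$. The work is then to compute this distributional Laplacian piece by piece.

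\textbf{Step 1: split off the atom at $0$.} Since the off-diagonal entries of $A_\ell$ vanish for $\ell<m$, we have $g(z)=z^{2m}\wt g(z)$ and $\sigma^2=\abs z^{2m}\wt\sigma^2$ with $\wt g,\wt\sigma^2$ polynomial.

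\begin{enumerate}
\item First check that $\wt\sigma^2$ is a polynomial in $z,\bar z$ and not merely a function. Write $\sigma^2(z,\bar z)=\sum_{\ell,k}c_{\ell k}z^\ell\bar z^k$, where $c$ is the positive semidefinite covariance matrix of the coefficients. Positive semidefiniteness forces $c_{\ell k}=0$ whenever $\ell<m$ or $k<m$, so $\abs z^{2m}$ divides $\sigma^2$.
\item Use the explicit formula \cref{a24} to check the homogeneity
\[U^{\rho,\sigma}(w)=-\log\abs z^m+U^{\rho,\wt\sigma}(w/z^m).\]
Both branches of \cref{a24} are invariant under $(w,g,\sigma^2)\mapsto(w/\lambda,g/\lambda^2,\sigma^2/\abs\lambda^2)$ up to the additive $\log\abs\lambda$. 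Hence
\[F(z)=-m\log\abs z+\wt F(z),\qquad \wt F(z)=U^{\wt\rho,\wt\sigma}(z^{d-m}).\]
\item Since $-\frac1{2\pi d}\Delta(-m\log\abs z)=\frac md\delta_0$, this gives $\mu_\zerod$.
\end{enumerate}

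\textbf{Step 2: the regions and the two-dimensional part.} Away from $\partial\shape_\twod\cup\shape_\oned$ and the zero sets of $\wt g$ and $\wt\sigma^2$, the function $\wt F$ is real-analytic.

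\begin{enumerate}
\item Outside the ellipse, $\wt F$ is harmonic. It is $-\log\abs{h}-\frac12\Re(\cdot)$, where $h$ is built from a locally holomorphic branch of $\sqrt{z^{2d-2m}-4\wt g(z)}$, and the real part of a holomorphic function is harmonic. So there is no contribution there.
\item Inside the ellipse with $\abs{\wt g}<\wt\sigma^2$, we take the classical Laplacian of the first branch. This gives exactly the displayed formula for $\mu_\twod$, including the $\log\wt\sigma^2$ term.
\item Along $\partial\shape_\twod$ there is no singular layer, because $U^{\rho,\sigma}$ is $C^1$ in $w$ when $\abs\rho<1$ and depends smoothly on the parameters. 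So $\wt F$ is $C^1$ across the boundary and the distributional Laplacian has no jump term there.
\item The exceptional sets are null. Where the determinantal identity fails they are algebraic, and where the potential is merely continuous they are curves across which $\nabla\wt F$ is continuous. Hence they contribute nothing beyond $\shape_\oned$.
\end{enumerate}

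\textbf{Step 3: the one-dimensional part.} On the region where $\abs{\wt g}=\wt\sigma^2$, the ellipse degenerates to the segment $[-\sqrt{4\wt g},\sqrt{4\wt g}]$. The potential is then the log-potential of a rotated semicircle, namely the second branch everywhere. This is continuous but has a jump in its normal derivative exactly where $z^{d-m}$ lies on the segment, which is the defining condition of $\shape_\oned$.

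\begin{enumerate}
\item By Green's formula, $\Delta\wt F$ equals the jump of $\partial_n\wt F$ times $\H^1$ on this curve.
\item Write $s(z)=\sqrt{z^{2d-2m}-4\wt g(z)}$. The two one-sided boundary values of $s$ differ by sign, so the jump of $\nabla\wt F$ comes from $\Re\pare{\phi(z)}$ with
\[\phi=-\log\frac{z^{d-m}+s}{2}-\frac12\frac{z^{d-m}-s}{z^{d-m}+s},\]
a holomorphic function whose sign-flip jump is computed by the chain rule.
\item Differentiating in $z$ yields a factor $\frac{d}{dz}\pare{z^{d-m}/\sqrt{\wt g}}$, which produces $\abs z^{d-m-1}\abs{2(d-m)\wt g-z\wt g'}/\abs{\wt g}^{3/2}$. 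The factor $\abs{s}/\abs{\wt g}^{1/2}$ comes from the semicircle density.
\item Combining these and normalizing by $-\frac1{2\pi d}$ should give the stated density of $\mu_\oned$.
\end{enumerate}

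\textbf{Main obstacle.} The hard part is Step 3. It requires a careful computation of the normal-derivative jump of a multivalued expression along a curve defined implicitly. It also requires an argument that no hidden singular mass sits on $\partial\shape_\twod$, at zeros of $\wt g$, or in the mixed case where $\abs{\wt g}=\wt\sigma^2$ holds only on a curve. For that mixed case I would use the fact that $F$ is continuous with a locally bounded gradient near such curves, so they carry no mass.
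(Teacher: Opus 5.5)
\textbf{There is a genuine gap in the one-dimensional part.}

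Your Step 1 is a valid alternative to the paper's deflation $\hat P=z^m\wt P$. It uses the potential-level scaling $U^{\rho,\sigma}(w)=-\log\abs\lambda+U^{\rho',\sigma/\abs\lambda}(w/\lambda)$ with $\lambda=z^m$. Your Step 2 is also sound, and your $C^1$ argument across $\partial\A_\twod$ is more careful than the paper.

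The problem is that your Step 3 only treats the case where $\abs{\wt g}=\wt\sigma^2$ on an open set. In Step 2.4 and in your final paragraph you assert that when $\abs{\wt g}=\wt\sigma^2$ holds only on a curve $\Gamma$, that curve carries no mass. This is false, and it is exactly the generic way $\mu_\oned$ arises. For independent GUE coefficients (\Cref{a35}) one has $\sigma^2=\sum_j\abs z^{2j}$ and $g=\sum_j z^{2j}$. So $\abs g=\sigma^2$ exactly on $\R$; for $d=2$, $\sigma^4-\abs g^2=4(\Im z)^2$. Yet $\A_\oned=[-\alpha_d,\alpha_d]$ carries the real eigenvalues. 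Since $\A_\oned\subset\set{\abs{\wt g}=\wt\sigma^2}$ by definition, your argument would output $\mu_\oned=0$ in this case, contradicting the statement.

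Your justification is also invalid on its own terms. A continuous function with locally bounded gradient can have a singular Laplacian on a curve: $\Delta\abs{y}=2\,\H^1\big|_{\set{y=0}}$. Only continuity of the normal derivative rules out such mass.

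\textbf{The missing idea.} Near an arc of $\A_\oned\subset\Gamma$, one has $\wt F=\Re\psi_\out$ on both sides of $\Gamma$, not $\psi_\inn$.
\begin{enumerate}
\item The function $\wt\sigma^2-\abs{\wt g}$ is nonnegative and vanishes on $\Gamma$. Hence the semi-minor axis $(\wt\sigma^2-\abs{\wt g})/\wt\sigma$ of $\calE_z$ is $O(\dist(z,\Gamma)^2)$.
\item The map $p=z^{d-m}/\sqrt{\wt g}$ is holomorphic near the arc and real on it, and $p'\neq0$ off a discrete set. So $\Im p(z)$, and with it the distance from $z^{d-m}$ to the focal line, grows linearly in $\dist(z,\Gamma)$.
\item Therefore $z^{d-m}\notin\calE_z$ for $z$ near but off $\Gamma$. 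It follows that $\wt F=U^{1,1}\circ p-\frac12\log\abs{\wt g}$ on a full neighbourhood of the arc.
\end{enumerate}
Your Step 3 normal-derivative jump computation then applies verbatim.

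The paper never splits into these two cases. It writes $f=\psi_\inn$ on $\A_\twod$ and $f=\Re\psi_\out$ elsewhere. It identifies $\A_\oned=h^{-1}([0,4])\setminus\A_\twod$ with $h=z^{2d-2m}/\wt g$. It then obtains the density by pulling back the semicircle Laplacian under $p=\sqrt h$, which does not require $\abs{\wt g}\equiv\wt\sigma^2$.
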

\begin{proof}%
Let $\wt P(z)=z^{d-m}-\sum_{\ell=m}^{d-1}A_\ell z^{\ell-m}$ and $\hat P(z)=z^m\wt P(z)$. Notice that the $\sigma^2$ and $g$ functions associated with $P$ and $\hat P$ are the same, and so $\mu_P$ and $\mu_{\hat P}$ have the same limiting measures by \Cref{a8}.
By $\det(\hat P(z))=z^{mN}\det(\wt P(z))$, the eigenvalues of $\hat P$ are the eigenvalues of $\wt P$ with $mN$ copies of 0, i.e.
\[\mu_{\hat P}=\frac{d-m}d\mu_{\wt P} + \frac md\delta_0.\]
Next, the $\sigma^2$ and $g$ functions associated with $\wt P$ are $\wt \sigma^2$ and $\wt g$, and $\wt\sigma^2(0,0)\neq0$. Write $\wt\sigma(z,\bar z)=\sqrt{\wt\sigma^2(z,\bar z)}$. \Cref{a8} gives $\mu_{\wt P}\to-\frac1{2\pi}\Delta f$ for $f(z)=\frac1{d-m}U^{\wt g(z)/\wt\sigma^2(z,\bar z),\wt\sigma(z,\bar z)}(z^{d-m})$.
    By \Cref{a24}, for
\spliteq{\label{a33}}{
\psi_\inn(z,w)
&=\frac12-\frac{\wt\sigma^2(z,w)\abs z^{2d-2m}-\Re(\conj{\wt g(z)} z^{2d-2m})}{2(\wt\sigma^2(z,w)^2-\abs{\wt g(z)}^2)}-\frac12\log\wt\sigma^2(z,w),
\\
\psi_\out(z)
&=-\log\pare{\frac{z^{d-m}+\sqrt{z^{2d-2m}-4\wt g(z)}}2}-\dfrac12\pare{\frac{z^{d-m}-\sqrt{z^{2d-2m}-4\wt g(z)}}{z^{d-m}+\sqrt{z^{2d-2m}-4\wt g(z)}}},
}
we have
\spliteq{}{
f(z)
=\frac1{d-m}
\begin{dcases}
    \psi_\inn(z,\bar z) &
    z^{d-m}\in\calE_{\sqrt{4\wt g(z)},\wt\sigma(z,\bar z)+\abs{\wt g(z)}/\wt\sigma(z,\bar z)}
    \\\,
    \Re(\psi_\out(z)) & 
    z^{d-m}\not\in\calE_{\sqrt{4\wt g(z)},\wt\sigma(z,\bar z)+\abs{\wt g(z)}/\wt\sigma(z,\bar z)}
\end{dcases}.
}
By continuity of $U^{\rho,\sigma}$, the two pieces agree, $\psi_\inn(z,\bar z)=\Re(\psi_\out(z))$, on the boundary of the ellipse. If $\abs\rho=1$ then $\calE_{\sqrt{4\rho},1+\abs\rho}$ is a line segment meaning $f(z)=\frac1{d-m}\Re(\psi_\out(z))$ for each $z$ with $\abs{g(z)}=\sigma^2(z,\bar z)$. Next, notice
\spliteq{}{
\set{z\in\C:\abs{\wt\rho(z,\bar z)}<1}\cap
\calE_{\sqrt{4\wt g(z)},\wt\sigma(z,\bar z)+\abs{\wt g(z)}/\wt\sigma(z,\bar z)}
=\A_\twod.}
Thus, we may write $f$ as
\[f(z)=\frac1{d-m}\begin{dcases}\psi_\inn(z,\bar z) & z\in\A_\twod\\\Re(\psi_\out(z)) & z\not\in\A_\twod\end{dcases}.\]
This immediately implies $\Delta f\big|_{\A_\twod}=\frac1d\Delta \psi_\inn(z,\bar z)\big|_{\A_\twod}$.
On $\A_\twod$, notice that $\psi_\inn(z,\bar z)$ is smooth and so the Laplacian can be computed directly.
If $g=0$ then $\A_\oned=\emptyset$ so the theorem would be concluded. Now assume $g\neq 0$.
Our cut of $\psi_\out$ ensures that it is locally holomorphic when $z^{d-m}$ is not in the line segment connecting the two roots of $4\wt g(z)$, i.e., the condition
\eq{\label{a34}\abs{z^{d-m}-\sqrt{4\wt g(z)}}+\abs{z^{d-m}+\sqrt{4\wt g(z)}}\neq2\sqrt{4\abs{\wt g(z)}}}
implies $\psi_\out$ is locally holomorphic at $z$.
Set $h(z)=z^{2d-2m}/\wt g(z)$ and notice \Cref{a34} is met exactly when $z\not\in h^{-1}([0,4])$, and $A_\oned=h^{-1}([0,4])\backslash\A_\twod$.
By the Cauchy-Riemann equations, this means $\Delta f\big|_{\C\backslash(\A_\oned\cup\A_\twod)}=0$.
For the measure on $\A_\oned$, first consider the logarithmic potential of the semi-circular distribution, $U^{1,1}$. This is a one dimensional measure so the chain-rule gives
\[
-\frac1{2\pi}\Delta U^{1,1}=\frac1{2\pi}\sqrt{4-z^2}\cdot\H^1\big|_{[-2,2]}
\implies
-\frac1{2\pi}\Delta(U^{1,1}\circ p)=\frac1{2\pi}\sqrt{4-p(z)^2}\,\abs{p'}\cdot\H^1\big|_{p^{-1}([-2,2])}
\]
For $p=\sqrt h$, notice $U^{1,1}\circ p=\Re(\psi_\out)+\frac12\log\abs{\wt g}$ and $p^{-1}([-2,2])=h^{-1}([0,4])$. Since the semi-circular law has an even density function, the branch of $\sqrt h$ is immaterial. On $\A_\oned$, observe that $\wt g$ cannot have a zero. If it did, membership in $\A_\oned$ implies $\abs z=0$, but then we would have $\abs{\wt g(0)}=\wt\sigma^2(0,0)=0$ which is ruled out by construction of $\wt\sigma$.
This means $\log\abs{\wt g(z)}$ is harmonic on $\A_\oned$ so vanishes under application of the Laplacian. Therefore
\spliteq{}{
-\Delta(U^{1,1}\circ p)
=
-\Delta\Re(\psi_\out(z))
  &=\sqrt{4-h(z)}\,\sabs{p'(z)}\cdot\H^1\big|_{\A_\oned}
\\&=\sqrt{4-h(z)}\,\abs{\frac{h'(z)}{2h(z)^{\frac12}}}\cdot\H^1\big|_{\A_\oned}.
}
Observe that $4-h(z)$ is a nonnegative real number for $z\in\A_\oned$, so we may replace it with $\abs{4-h(z)}$. Applying the quotient rule to $h$ and simplifying finishes the theorem.
    
\end{proof}

{
}

\subsection{Independent Wigner coefficients}
One interesting special case is when the coefficients $A_j$ are independent and elliptic with correlation 1, e.g. GUE. In this case, we can work out an explicit formula for $\mu_\oned$ for any $d$. In principle, one can do the same for $\mu_\twod$ as well, but we perform the calculations for $d=2$ only.

\begin{corollary}\label{a35}
Let $\alpha_d\in[2,\sqrt5)$ be the unique root in that interval of $x^{2d+2}-5x^{2d}+4$.
If the coefficients of $P$ are independent matrices satisfying $\Cone$ (\Cref{a19}) with $\tau=0$, variance 1, and correlation 1 (e.g., are GUE), then
\[
\mu_\oned=\frac1d\cdot\frac{\abs{x}^{d-1}\pare{x^{2d+2}-(d+1)x^2+d}}{2\pi \pare{x^{2d}-1}^2}\sqrt{\frac{x^{2d+2}-5x^{2d}+4}{1-x^2}}\cdot\H^1\big|_{[-\alpha_d,\alpha_d]}
.\]
(NB: the apparent singularities at $x=\pm1$ are removable discontinuities). If $d=2$, then for $z=x+iy$, $r^2=x^2+y^2$,
\[
\A_\twod=\set{z\in\C:(1+r^2)((r^2+2)^2x^2+r^4y^2)<4y^2}\qand
\mu_\twod
=\frac1{2\pi}\pare{1+2r^2+\frac1{(1+r^2)^2}}\cdot\H^2\big|_{\A_\twod}.\]
\end{corollary}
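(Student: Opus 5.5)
The plan is to specialize \Cref{a10} by computing $\sigma^2$ and $g$ explicitly. Since the $A_\ell$ are independent, satisfy \Cone with $\tau=0$, and have variance $1$ and correlation $1$, we have $N\E(A_\ell)_{ij}(A_\ell)_{ji}=1$, and all cross terms between different $\ell$ vanish. Hence, by \Cref{a29},
\[
\sigma^2(z,\bar z)=\sum_{\ell=0}^{d-1}\abs z^{2\ell},\qquad g(z)=\sum_{\ell=0}^{d-1}z^{2\ell}.
\]
Because $A_0$ has nonzero off-diagonal entries, $m=0$, so $\wt\sigma^2=\sigma^2$ and $\wt g=g$, and \Cref{a10} applies directly. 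By the triangle inequality, $\abs{g(z)}=\sigma^2(z,\bar z)$ holds iff all the $z^{2\ell}$ share an argument, i.e.\ iff $z^2\ge0$, i.e.\ iff $z\in\R$. So $\A_\oned\subset\R$, while $\A_\twod\subset\C\setminus\R$ up to the real line.

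\textbf{One-dimensional part.} For real $x$ we have $g(x)=(x^{2d}-1)/(x^2-1)$. In the proof of \Cref{a10}, $\A_\oned$ was identified with $h^{-1}([0,4])$ where $h=x^{2d}/g$. I will check that $h(x)\le4$ is equivalent to
\[
\frac{x^{2d+2}-5x^{2d}+4}{1-x^2}\ge0 .
\]
This needs two cases. For $\abs x<1$ it holds automatically, since $h\le1$ there. For $\abs x>1$ it reduces to $x^{2d}(x^2-5)+4\le0$. This polynomial is $\le0$ at $x=2$ (its value is $4-4^d$), positive at $\sqrt5$, and changes sign once on $(1,\infty)$. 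Hence the set is exactly $[-\alpha_d,\alpha_d]$.

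The density then comes from the $\mu_\oned$ formula of \Cref{a10} with $m=0$. The key simplification is
\[
2dg-xg'=\frac{2\pare{x^{2d+2}-(d+1)x^2+d}}{(x^2-1)^2},\qquad 4g-x^{2d}=\frac{x^{2d+2}-5x^{2d}+4}{1-x^2}.
\]
Substituting these and $g^2=(x^{2d}-1)^2/(x^2-1)^2$, the factors of $(x^2-1)^2$ cancel and the constant becomes $1/(2\pi d)$, which gives the stated formula. Removability of the singularity at $x=\pm1$ follows because $g(\pm1)=d\ne0$.

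\textbf{Two-dimensional part for $d=2$.} Here $\sigma^2=1+r^2$ and $g=1+z^2$. The key identity is $\sigma^4-\abs g^2=2(r^2-\Re z^2)=4y^2$. For the support, I rotate by $e^{-\frac i2\arg g}$ and write the ellipse membership $z^2\in\calE_z$ in Cartesian form, with semi-axes $(\sigma^2\pm\abs g)/\sigma$. After multiplying out, the plan is to show this reduces to $(1+r^2)((r^2+2)^2x^2+r^4y^2)<4y^2$; the denominators $\sigma^4-\abs g^2=4y^2$ should make this polynomial.

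For the density, I compute
\[
\frac1{8\pi}\Delta\pare{\frac{(1+r^2)r^4-\Re(z^4+r^4z^2)}{4y^2}+\log(1+r^2)}.
\]
The numerator should factor as a multiple of $y^2$, for instance $r^4(1+r^2-\Re z^2)-\Re z^4$ contains $2y^2$ terms, so the rational function becomes a polynomial in $x,y$ plus the log term. Then use $\Delta\log(1+r^2)=4/(1+r^2)^2$.

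I expect the main obstacle to be this $d=2$ algebra: both the exact simplification of the ellipse inequality (choosing the branch of $\sqrt g$ and converting the rotated ellipse into the stated polynomial inequality) and confirming that the numerator is divisible by $y^2$. I would first verify both on the imaginary axis and at small $\abs z$ before doing the full expansion.
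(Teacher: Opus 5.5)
Your proposal is correct and is essentially the paper's argument: specialize \Cref{a10} with $m=0$, $\sigma^2=\sum_\ell\abs z^{2\ell}$, $g=\sum_\ell z^{2\ell}$, obtain $\A_\oned=[-\alpha_d,\alpha_d]$ from the sign analysis of $x^{2d+2}-5x^{2d}+4$, and do the $d=2$ algebra directly. Your identities check out: the numerator equals $2y^2(4x^2+r^4)$, so the density is $\frac1{8\pi}\Delta\pare{2x^2+\frac{r^4}2+\log(1+r^2)}$. One small point: your reduction $\abs g=\sigma^2\iff z\in\R$ needs $d\ge2$, but $d=1$ is just the semicircle case.

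The only real difference is in how you handle $\A_\twod$. The paper uses the rotation-free description $\sigma^2\abs{\sigma^2w-g\bar w}^2\le(\sigma^4-\abs g^2)^2$ of $\calE_{\sqrt{4g},\sigma+\abs g/\sigma}$, which removes the branch and rotation bookkeeping you flagged as the main obstacle. With $w=z^2$ one has $\sigma^2z^2-g\bar z^2=2y\pare{-r^2y+i(r^2+2)x}$, so the stated inequality follows immediately after dividing by $4y^2$.
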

\begin{proof}
Apply \Cref{a8}.
Observe that $m=0$ so $\wt \sigma^2(z,\bar z)=\sigma^2(z,\bar z)=\sum_{j=0}^{d-1}\abs z^{2j}$ and $\wt g(z)=g(z)=\sum_{j=0}^{d-1}z^{2j}$.
It is clear that $\A_\oned\subset\R$, so one computes it is $[-\alpha_d,\alpha_d]$ by the intermediate value theorem.
To compute $\A_\twod$, recall that
\[
\shape_\twod=\set{z\in\C:
\begin{aligned}
\abs{g(z)}&<\wt\sigma^2(z,\bar z)
\\
    z^d&\in\calE_{ \sqrt{4g(z)},\sigma(z,\bar z)+\abs{g(z)}/\sigma(z,\bar z) }.
\end{aligned}
}
\]
A convenient formula for the ellipse with $\sigma>0$ is
\[\calE_{\sqrt{4g},\sigma+\abs g/\sigma}=\set{z\in\C: \sigma^2\abs{\sigma^2 z- g\bar z}^2 \le (\sigma^4 - \abs g^2)^2}.\]
Direct computation gives the final expressions for $d=2$.

\end{proof}

\subsection{The large-\texorpdfstring{$d$}{d} limit}
In \Cref{a36} the limiting ESD visually approaches the uniform distribution on the unit circle as $d$ gets larger and larger. In this section, we verify that this is indeed the case. In particular, we provide general conditions under which the large-$d$ limit of the large-$N$ limit of the ESD of $P$ approaches the uniform distribution on the circle. These conditions deal with the pointwise asymptotic-in-$d$ values of the $\sigma$ and $g$ functions; roughly speaking, when $\sigma(z,\bar z)$ is $\Theta(1+z^{d-O(1)})$ for each fixed $z$ they are satisfied. For instance, they are satisfied if 
the coefficients $A_j$ are independent and elliptic with any correlation and variances $\sigma_j^2$ satisfying $\sigma_0^2\neq0$ and $\sigma_j^2=\Theta(1)$.
\begin{theorem}\label{a12}
Consider a sequence of elliptic polynomial matrices $P_1,P_2,\cdots,$ satisfying \textnormal{C1-P} (\Cref{a9}) where $P_d$ has degree $d$. Denote $\mu_d=\lim\limits_{N\to\infty}\mu_{P_d}$. Let $\sigma_d^2$, $g_d$, $\rho_d$, be the functions \Cref{a29} for $P_d$. Suppose
\begin{enumerate}[1.]
    \item 
\eq{\label{a37}S:=\set{ z:\limsup_{d\to\infty}  \abs{ \rho_d(z,\bar z) } < 1\text{ and }\liminf_{d\to\infty}\sigma^2_d(z,\bar z)>0 }}
has full-measure,
\item\eq{\label{a38}\lim_{d\to\infty}\pare{\sigma^2(z,\bar z)}^{1/{2d}}=\max(1,\abs z)\quad\forall z,}
\item there exists $R$ with \eq{\label{a39} \limsup_{d\to\infty}\sup_{\abs z>R}\abs{\frac{\sigma^2(z,\bar z)}{z^{2d}}}<\frac14. }
\item
\eq{\label{a40}
\lim_{d\to\infty}\frac{z^{2d}}{d\cdot\sigma^2_d(z,\bar z)}=0\quad\forall z,}and
\item
\eq{\label{a41}
\limsup_{d\to\infty}\abs{\frac{g_d(z)}{z^{2d}}}<\infty\quad\forall\abs z>1.}
\end{enumerate}
Then $\lim\limits_{d\to\infty}\mu_d$ is the uniform distribution on the unit circle.
\end{theorem}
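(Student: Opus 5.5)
The plan is to work entirely with logarithmic potentials and invoke \Cref{a22} once more, now in the limit $d\to\infty$. By \Cref{a8} together with \Cref{a22}, $\mu_d=-\frac1{2\pi}\Delta f_d$, where $f_d(z)=\frac1d U^{\rho_d(z,\bar z),\sigma_d(z,\bar z)}(z^d)$ is also the logarithmic potential of $\mu_d$. The uniform measure on the unit circle has potential $f(z)=-\log\max(1,\abs z)$. It therefore suffices to establish two things. First, $f_d(z)\to f(z)$ for almost every $z$; here the limit is deterministic, so no probabilistic subtlety arises. Second, $\int\log(1+\abs w)\wrt\mu_d(w)$ is bounded uniformly in $d$.

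For the tightness step, I would use condition \Cref{a39}. For $\abs z>R$ and $d$ large we have $\sigma_d^2<\frac14\abs z^{2d}$, and $\abs{g_d}\le\sigma_d^2$. Hence $z^d$ lies outside the ellipse $\calE_z$: its semi-major axis is at most $2\sigma_d<\abs z^d$. The formula \Cref{a24} on the exterior is harmonic in $z$, so $\mu_d$ puts no mass on $\set{\abs z>R}$ for all large $d$. Thus $\mu_d$ is supported in $\disk(0,R)$, which gives tightness and the log-moment bound at once. (Alternatively, one could argue via the companion-matrix Frobenius bound of \Cref{a32}.)

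For pointwise convergence, fix $z\in S$, so that $\abs{\rho_d}\le1-\eps$ and $\sigma_d^2\ge c>0$ for large $d$. I would split into two cases.

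\emph{Case $\abs z<1$.} Here $\abs z^{2d}\to0$, while the ellipse $\sigma_d\calE_{\sqrt{4\rho_d},1+\abs{\rho_d}}$ contains a disk of radius $\sigma_d(1-\abs{\rho_d})\ge\sqrt c\,\eps$, so $z^d$ lies inside it. The interior formula then gives
\[
f_d(z)=\frac1d\pare{\frac12-\frac{\sigma_d^2\abs z^{2d}-\Re(\conj{g_d}z^{2d})}{2(\sigma_d^4-\abs{g_d}^2)}-\frac12\log\sigma_d^2}.
\]
The middle term is $O(\abs z^{2d}/(\sigma_d^2\eps))\to0$. By \Cref{a38}, $\frac1{2d}\log\sigma_d^2\to0$, so $f_d(z)\to0=f(z)$.

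\emph{Case $\abs z>1$.} The natural route is the exterior formula. With $w=z^d$ and $t=4g_d/z^{2d}$, which is bounded by \Cref{a41},
\[
f_d=-\frac1d\log\abs{z^d}-\frac1d\log\abs{\tfrac{1+\sqrt{1-t}}2}-\frac1{2d}\Re\frac{1-\sqrt{1-t}}{1+\sqrt{1-t}} .
\]
The first term is exactly $-\log\abs z$. The remaining terms are $O(1/d)$ as long as $\abs{1+\sqrt{1-t}}$ stays away from $0$; with the principal branch, $\Re\sqrt{\cdot}\ge0$ gives $\abs{1+\sqrt{1-t}}\ge1$. This requires $z^d\notin\calE_z$.

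If instead $z^d$ falls inside the ellipse, I would use the interior formula. Its middle term is $O(\abs z^{2d}/(d\sigma_d^2\eps))$, which tends to $0$ by \Cref{a40}. The log term $-\frac1{2d}\log\sigma_d^2$ tends to $-\log\abs z$ by \Cref{a38}. So either formula yields the limit $-\log\abs z$, and I may not even need to decide which region $z^d$ lies in. Points with $\abs z=1$ form a null set and can be ignored.

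The main obstacle I expect is this bookkeeping in the case $\abs z>1$. Specifically, I must verify that the exterior expression is uniformly controlled only using \Cref{a41}, in particular that the branch of the square root is the one specified in \Cref{a24}, and I must make the interior/exterior dichotomy rigorous. After that, \Cref{a22} applied to the deterministic sequence $\mu_d$ finishes the proof.
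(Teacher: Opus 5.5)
Your proposal is correct and follows the paper's proof essentially step for step: the paper likewise shows that $z^d$ eventually lies in the ellipse for $z\in S$ with $\abs z<1$ (\Cref{a42}), gets the limit $-\log\max(1,\abs z)$ from $\frac1d\psi_\inn$ via \Cref{a38,a40} and from $\frac1d\Re\psi_\out$ via \Cref{a41} (\Cref{a43}), and concludes with Gauss's shell law and \Cref{a22}. Your uniform use of \Cref{a39} to confine $\mu_d$ to $\disk(0,R)$ and your bound $\abs{1+\sqrt{1-t}}\ge1$ make explicit the tightness and the exterior estimate that the paper only sketches.

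The one thing to drop is the parenthetical alternative via \Cref{a32}. The bound it gives after $N\to\infty$ is $\frac{d-1}d+\frac1d\sum_\ell N\E\abs{(A_\ell)_{12}}^2$, and this need not stay bounded in $d$ under the hypotheses. For example, independent Ginibre coefficients with $A_0$ of variance $d^5$, the others of variance $1$, and $R^2>5$ satisfy \Cref{a37,a38,a39,a40,a41}. So the support argument from \Cref{a39} is the one you need.
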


Let $\A\at d_\twod$ be $\A_{\twod}$ corresponding to the polynomial eigenvalue problem of degree $d$.
Our first lemma for \Cref{a12} shows that $\A_\twod$ ``fills out'' the unit disk and that the spectrum of $P$ is contained. Our next shows convergence of the logarithmic potential, which with \Cref{a22}, implies the result.

\begin{lemma} \label{a42}For every $z\in S$ with $\abs z<1$,
\[\lim_{d\to\infty}\one_{ z^d\in\calE_{\sqrt{4g_d(z)},\sigma_d(z,\bar z)+\abs{g_d(z)}/\sigma_d(z,\bar z)} }=1.\]
Moreover, for $\abs z>R$,
\[\lim_{d\to\infty}\one_{ z^d\in\calE_{\sqrt{4g_d(z)},\sigma_d(z,\bar z)+\abs{g_d(z)}/\sigma_d(z,\bar z)} }=0.\]
\end{lemma}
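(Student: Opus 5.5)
The plan is to use the explicit description of the filled ellipse recorded in the proof of \Cref{a35}. For $\sigma>0$ and $\abs g<\sigma^2$,
\[
\calE_{\sqrt{4g},\sigma+\abs g/\sigma}=\set{w\in\C:\sigma^2\abs{\sigma^2 w-g\bar w}^2\le(\sigma^4-\abs g^2)^2}.
\]
Dividing by $\sigma^6$ and writing $\rho=g/\sigma^2$, the membership condition becomes
\[
\abs{w-\rho\bar w}^2\le\sigma^2(1-\abs\rho^2)^2.
\]
In the degenerate case $\abs\rho=1$ the set is the segment joining the foci, which lies in the disk of radius $2\sigma$. This reformulation turns both claims into comparisons between $\abs{z^d}$ and $\sigma_d(z,\bar z)$, after which the hypotheses \Cref{a37,a38,a39} do the work.

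\textbf{First claim.} Fix $z\in S$ with $\abs z<1$. By the definition of $S$ there exist $\delta>0$ and $c>0$ such that, for all large $d$, $\abs{\rho_d(z,\bar z)}\le1-\delta$ and $\sigma_d^2(z,\bar z)\ge c$. Then
\[
\abs{z^d-\rho_d\overline{z^d}}\le(1+\abs{\rho_d})\abs z^d\le2\abs z^d\to0,
\]
while the right-hand side satisfies $\sigma_d(1-\abs{\rho_d}^2)\ge\sqrt c\,\delta$. Hence the membership inequality holds for all large $d$, and the indicator tends to $1$. Hypothesis \Cref{a38} is not needed here; only the liminf bound on $\sigma_d^2$ from $S$ is used.

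\textbf{Second claim.} Fix $\abs z>R$. By \Cref{a39} there is $\eta<\tfrac14$ with $\sigma_d^2(z,\bar z)\le\eta\abs z^{2d}$ for all large $d$, that is, $\sigma_d\le\sqrt\eta\,\abs z^d<\tfrac12\abs z^d$.

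For the non-degenerate case, note that every point of $\calE_{\sqrt{4g},\sigma+\abs g/\sigma}$ has modulus at most the semi-major axis $\sigma+\abs g/\sigma=\sigma(1+\abs\rho)\le2\sigma$. For the segment case, every point likewise has modulus at most $2\sigma$. The case $\sigma_d=0$ is trivial, since then the ellipse is $\set0$ and $z^d\neq0$.

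It therefore suffices that $2\sigma_d<\abs z^d$, which follows from $\sigma_d<\tfrac12\abs z^d$. So $z^d\notin\calE$ for all large $d$, and the indicator tends to $0$.

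\textbf{Main obstacle.} The only delicate step is ensuring that the crude bound ``modulus of any point of the ellipse is at most $2\sigma$'' is valid uniformly in the degenerate and non-degenerate cases. This is also why the constant in \Cref{a39} has to be $\tfrac14$: it gives $\sqrt\eta<\tfrac12$, which is exactly the margin needed for $2\sigma_d<\abs z^d$. I would verify the bound directly from the focal definition $\abs{w-f}+\abs{w+f}\le2a$, which gives $2\abs w\le2a$ by the triangle inequality, together with $a=\sigma(1+\abs\rho)$ and $\abs\rho\le1$.
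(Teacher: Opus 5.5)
Your proof is correct and follows the paper's argument. For the first claim, your explicit inequality $\abs{w-\rho\bar w}\le\sigma(1-\abs\rho^2)$ together with $\abs{w-\rho\bar w}\le(1+\abs\rho)\abs w$ shows that the disk of radius $\sigma_d(1-\abs{\rho_d})$ lies inside the ellipse; this radius stays bounded away from $0$ on $S$ while $z^d\to0$, and the paper gets the same disk directly from the semi-minor axis $\sigma-\abs g/\sigma$. For the second claim you use exactly the paper's inclusion of the ellipse in $\disk(0,2\sigma_d)$ together with \Cref{a39}.
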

\begin{proof}
Put $g=g_d$ and $\sigma=\sigma_d$.
The semi-minor axis of $\calE_{\sqrt{4\rho},1+\abs\rho}$ is $1-\abs\rho$, so
\[\calE_{\sqrt{4g(z)},\sigma(z,\bar z)+\abs{g(z)}/\sigma(z,\bar z)} \supset \disk\pare{0, \sigma(z,\bar z)-\abs{g(z)}/\sigma(z,\bar z)}.\]
When $z\in S$, this is a disk of strictly positive radius in the limit as $d\to\infty$ centered at the origin. On the other hand, when $\abs z<1$, $\lim\limits_{d\to\infty}z^d=0$ and consequently
\[
\pare{ z\in S \text{ and }\abs z<1 }\implies\lim_{d\to\infty}\one_{ z^d\in\calE_{\sqrt{4g(z)},\sigma(z,\bar z)+\abs{g(z)}/\sigma(z,\bar z)} }=1
\]
On the other hand, the inclusion
\[z^d\in\calE_{\sqrt{4g(z)},\sigma(z,\bar z)+\abs{g(z)}/\sigma(z,\bar z)} \subset \disk\pare{0, \sigma(z,\bar z)+\abs{g(z)}/\sigma(z,\bar z)}
\subset\disk\pare{0, 2\sigma(z,\bar z)}\]
implies $\abs z^d\le 2\sigma(z,\bar z)$ which does not happen for $\abs z>R$ in the limit by \Cref{a39}.
\end{proof}
\begin{lemma} \label{a43}
\[\lim_{d \to \infty} U^{\mu_d}(z) = \begin{cases}
            -\log\abs z & \abs z>1 \\ 0 & \abs z<1\end{cases}\]
for each $z\in S$.
\end{lemma}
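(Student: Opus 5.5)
By \Cref{a8} and its proof, for every $z\in S$ the potential of $\mu_d$ is
\[
U^{\mu_d}(z)=\frac1d\,U^{\rho_d(z,\bar z),\sigma_d(z,\bar z)}(z^d),
\]
given by the two-case formula \cref{a24} with $(\rho,\sigma)$ replaced by $(\rho_d(z,\bar z),\sigma_d(z,\bar z))$. For $z\in S$ we have $\abs{\rho_d}<1$ and $\sigma_d^2>0$ for all large $d$, so the formula is the genuine elliptic one. The plan is to split according to $\abs z$. For $\abs z<1$, \Cref{a42} puts $z^d$ inside the ellipse for large $d$, so the interior branch applies. For $\abs z>R$, \Cref{a42} puts $z^d$ outside, so the exterior branch applies. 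The remaining range $1\le\abs z\le R$ must be handled by a separate argument.

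\emph{Inside the unit disk.} Here we use the interior branch:
\[
\frac1d\left(\frac12-\frac{\sigma^2\abs z^{2d}-\Re(\bar g z^{2d})}{2(\sigma^4-\abs g^2)}-\frac12\log\sigma^2\right).
\]
Since $\abs{g}\le\sigma^2$, the middle fraction is at most $\frac{2\sigma^2\abs z^{2d}}{2\sigma^4(1-\abs{\rho_d}^2)}=\frac{\abs z^{2d}}{\sigma^2(1-\abs{\rho_d}^2)}$. This tends to $0$ because $\abs z^{2d}\to0$, $\liminf\sigma_d^2>0$ and $\limsup\abs{\rho_d}<1$. The last term is $-\log\sigma_d^{2}/(2d)=-\log\bigl((\sigma_d^2)^{1/(2d)}\bigr)$, which tends to $-\log\max(1,\abs z)=0$ by \cref{a38}. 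Hence the limit is $0$.

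\emph{Outside the disk of radius $R$.} Here we use the exterior branch:
\[
\frac1d\left(-\log\abs{\frac{z^d+\sqrt{z^{2d}-4g}}2}-\frac12\Re\frac{z^d-\sqrt{z^{2d}-4g}}{z^d+\sqrt{z^{2d}-4g}}\right).
\]
Write $u=4g_d(z)/z^{2d}$, so that $\sqrt{z^{2d}-4g}=z^d\sqrt{1-u}$. By $\abs g\le\sigma^2$ and \cref{a39}, $\abs u<1$ uniformly for large $d$, and \cref{a41} gives boundedness as well. The first term then equals $-\log\abs z-\frac1d\log\abs{(1+\sqrt{1-u})/2}$. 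The correction is $O(1/d)$ because $(1+\sqrt{1-u})/2$ stays in a compact set avoiding $0$. The second term is bounded and divided by $d$, so it also vanishes. The limit is therefore $-\log\abs z$.

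\emph{The annulus $1\le\abs z\le R$.} This is the main obstacle, because \Cref{a42} gives no information on which branch applies there. The approach is to bound both branches. The interior branch equals $-\log\abs z$ plus error terms. Since the potential $U^{\rho,\sigma}$ is continuous and dominated by $-\log\abs{w}$ plus bounded quantities, I expect a two-sided bound $\abs{dU^{\mu_d}(z)+d\log\max(\abs z,\sigma_d^{1/d})}\le C+o(d)$. For the interior branch on the annulus, the fraction is bounded by $\abs z^{2d}/(\sigma^2(1-\abs\rho^2))$. Condition \cref{a40} gives $\abs z^{2d}/\sigma_d^2=o(d)$, which is precisely the control needed after dividing by $d$. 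For the exterior branch, \cref{a38} gives $\abs{g}^{1/(2d)}\le\sigma_d^{1/d}\to\abs z$, so $\frac1d\log\abs{z^d+\sqrt{z^{2d}-4g}}\to\log\abs z$ up to $o(1)$.

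Finally, on the unit circle $\abs z=1$ both limits agree at $0$, so the cases are consistent. Since the unit circle has measure zero, it does not matter for the application of \Cref{a22}.
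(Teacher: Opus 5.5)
Your proposal is correct and is essentially the paper's argument. The interior branch divided by $d$ tends to $-\log\max(1,\abs{z})$ by \eqref{a38}, \eqref{a40} and $\limsup_d\abs{\rho_d}<1$, and the exterior branch tends to $-\log\abs{z}$ for $\abs{z}>1$, so the branch only matters for $\abs{z}<1$, where \Cref{a42} selects the interior one. Your separate $\abs{z}>R$ case (and with it \eqref{a39}) is redundant, since your annulus argument already covers all $\abs{z}>1$.

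The one step you should state explicitly is the pair of bounds $\abs{z^d+\sqrt{z^{2d}-4g}}\ge\abs{z}^d$ and $\abs{(z^d-\sqrt{z^{2d}-4g})/(z^d+\sqrt{z^{2d}-4g})}\le1$. Both follow from the branch choice: $\zeta=\frac12(z^d+\sqrt{z^{2d}-4g})$ satisfies $\abs{\zeta}\ge\abs{g}^{1/2}$ and $z^d=\zeta+g/\zeta$. The paper also leaves this point unfinished (it invokes \eqref{a41}), and your use of \eqref{a38} together with $\abs{g}\le\sigma^2$ gives the matching upper bound just as well.
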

\begin{proof}
    Recall the definitions of $\psi_\inn$ and $\psi_\out$, \Cref{a33}.
    For $\abs z>1$, $U^{\mu_d}(z)$ is determined by some combination of $\psi_\inn$ and $\psi_\out$. For $\abs z<1$, by \Cref{a42}, $U^{\mu_d}(z)$ is determined solely by $\psi_\inn$.
    First consider $|z| > 1$ and $z\in S$.
    By \Cref{a41}, $\abs{g_d(z)}$ is negligible compared to $z^{2d}$ in the limit, so {\color{red} flesh out}
    \[
        \lim_{d \to \infty} \frac{1}{d} \Re(\psi_{\out}(z))
        = -\lim_{d \to \infty} \frac{1}{d}\log |z|^d = -\log|z|.
    \]
    Now consider any $z\in S$.
    Writing $g=g_d(z)$ and $\sigma^2=\sigma_d^2(z,\bar z)$ and using \Cref{a40,a38},
    \spliteq{}{
    \lim_{d\to\infty}\frac1d\psi_\inn(z)
      &=-\lim_{d\to\infty}\frac1{2d}\log\sigma^2(z,\bar z) - \lim_{d\to\infty}\frac1{2d}\frac{\sigma^2\abs z^{2d}-\Re(\bar gz^{2d})}{(\sigma^2)^2-\abs g^2}
    \\&=-\lim_{d\to\infty}\frac1{2d}\log\sigma^2(z,\bar z) - \lim_{d\to\infty}\frac1{2d}\frac{\abs z^{2d}}{\sigma^2}\frac{1-\Re(\bar\rho z^{2d}/\abs z^{2d})}{1-\abs \rho^2}
    \\&=-\lim_{d\to\infty}\frac1{2d}\log\sigma^2(z,\bar z)
    \\&=-\log\max(1,\abs z)
    .}
\end{proof}
\begin{proof}[Proof of \Cref{a12}]
    This is a direct consequence of Gauss's shell law, and \Cref{a42,a43,a22}.
\end{proof}

\section{No Outliers}
The previous section identified regions, $\A_\oned$, $\A_\twod$, outside of which the limiting density vanishes.
In this section we show something stronger in the Gaussian case: there are almost surely no eigenvalues outside any small enlargement of $\A_\oned\cup\A_\twod$ or $\A_\oned\cup\A_\twod\cup\set0$ (depending if $m$ of \Cref{a10} is 0 or not). Our strategy is to first rule out large eigenvalues by a spectral norm bound, and then construct a net over a compact disk minus an open neighborhood of the locations where one expects some spectrum. At each point in the net, a lower bound on $\sigma_N(P(z))$ will imply $\sigma_N(P(z))>0$ at nearby points, and therefore no eigenvalues of $P$ there.

\newcommand{\F}{\mathcal F}
\begin{lemma}[{\cite[eq. (3.7)]{b6}}]\label{a44}
Let $X$ have Gaussian entries and say it satisfies $\Cone$ (\Cref{a19}) with variance $1$ and correlation $\rho\in[-1,1]$.
For each $\eps>0$, there exists $c>0$ such that\[\liminf_{N\to\infty}\inf_{z\in\C:\dist(z,\calE_{\sqrt{4\rho},1+\abs\rho})\ge\eps}\sigma_N(X_N-z)\ge c\]almost surely.
\end{lemma}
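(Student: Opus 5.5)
The plan is to reduce to a \emph{strong convergence} statement for Gaussian matrices, together with a net argument in $z$. First I would dispose of large $z$. For Gaussian $X$ satisfying \Cone{} with variance $1$, the standard operator-norm bound (e.g.\ via the decomposition below plus $\magn{H}\to2$ for GUE/GOE) gives $\limsup_N\magn{X_N}\le 2$ almost surely. Hence for $\abs z\ge 3$ we have $\sigma_N(X_N-z)\ge\abs z-\magn{X_N}\ge 1/2$ eventually, uniformly in $z$. It remains to handle the compact set $K_\eps=\set{z:\abs z\le3,\ \dist(z,\calE_{\sqrt{4\rho},1+\abs\rho})\ge\eps}$.

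Second, I would write $X$ as a fixed linear combination of two independent Hermitian Gaussian ensembles. For real $\rho\in[-1,1]$ (after the rotation used in the proof of \Cref{a25}),
\[X \overset{d}{=} \sqrt{\tfrac{1+\rho}2}\,H_1+i\sqrt{\tfrac{1-\rho}2}\,H_2 ,\]
up to a diagonal correction that is negligible in operator norm, since its entries are $O(\sqrt{\log N/N})$ almost surely. Here $H_1,H_2$ are independent GUE matrices, or GOE matrices, or a mixture, according to the value of $\tau$ in \Cone{}. Strong asymptotic freeness (Haagerup--Thorbj\o rnsen for GUE, Schultz for GOE) then gives, for the self-adjoint polynomial $p_z=(X-z)^*(X-z)$, almost sure Hausdorff convergence of its spectrum to the spectrum of $(x-z)^*(x-z)$. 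Here $x=\sqrt{\frac{1+\rho}2}s_1+i\sqrt{\frac{1-\rho}2}s_2$ is the elliptic element built from free semicircular elements $s_1,s_2$. Consequently $\sigma_N(X_N-z)\to\sigma_{\min}(x-z):=\magn{(x-z)^{-1}}^{-1}$ almost surely for each fixed $z$, where the right side is interpreted as $0$ if $x-z$ is not invertible.

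Third, I would show that $x-z$ is invertible for every $z$ outside the ellipse, and that $z\mapsto\sigma_{\min}(x-z)$ is continuous. The latter holds because $\sigma_{\min}$ is $1$-Lipschitz in $z$. Then $c_0:=\min_{z\in K_\eps}\sigma_{\min}(x-z)>0$ by compactness. The invertibility is the step I expect to be the main obstacle. I would try first to invoke the known fact that the spectrum of the elliptic (``$R$-diagonal-deformed'' semicircular) element equals the filled ellipse, e.g.\ via its Brown measure and the Haagerup--Larsen type description of spectra. Failing that, I would prove it directly. Write $x=\alpha c+\beta c^*$ with $c$ circular, and compare with the resolvent obtained from the subordination/Hermitization equations at $\eta\to0$. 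The solution has zero ``$\eta$-component'' exactly outside the ellipse, which yields a uniform gap of the Hermitized measure $\nu^{\mathrm{ell}}_{z,\rho}$ around $0$ (the measure from \cref{a27}). That gap, together with strong convergence, is exactly what is needed.

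Finally, I would make the bound uniform over $K_\eps$ by a net. Take a finite $c_0/4$-net $\set{z_k}$ of $K_\eps$. Almost surely, simultaneously for these finitely many $z_k$, we have $\sigma_N(X_N-z_k)\ge c_0/2$ for $N$ large. Since $\abs{\sigma_N(X_N-z)-\sigma_N(X_N-z_k)}\le\abs{z-z_k}$, this gives $\inf_{z\in K_\eps}\sigma_N(X_N-z)\ge c_0/4$ eventually. Combining with the first step, the lemma holds with $c=\min(c_0/4,1/2)$.
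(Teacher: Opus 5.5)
The paper gives no argument for this lemma; it imports it from Han's eq.~(3.7). Your proposal supplies a genuine proof by a different route, built from three ingredients:
\begin{enumerate}
\item strong convergence of the Gaussian model to the elliptic element $x_\rho=a s_1+ibs_2$, where $a=\sqrt{(1+\rho)/2}$ and $b=\sqrt{(1-\rho)/2}$;
\item the inclusion of $\mathrm{spec}(x_\rho)$ in the ellipse;
\item a $1$-Lipschitz/finite-net argument in $z$, which is the same compactness device the paper uses in proving \Cref{a11}.
\end{enumerate}
The architecture is sound. It also shows the lemma holds for any ensemble converging strongly to $x_\rho$, whereas the citation is shorter but opaque. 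Three steps need repair.

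\emph{The decomposition is not as you describe.} Matching $\E\xi_1^2=\E\xi_2^2=\tau$ and $\E\xi_1\conj{\xi_2}=\tau\rho$ forces $N\E(H_1)_{12}^2=\tau$ and $N\E(H_2)_{12}^2=-\tau$. So for $\tau\neq0$ one summand contains $i$ times a real antisymmetric Gaussian matrix (for instance, $\tau=1$ is the real elliptic ensemble). That summand is neither GOE, GUE, nor a mixture. Moreover, for $\rho=0$ and $\tau\notin\R$, no decomposition with independent $H_1,H_2$ exists until you replace $X$ by $e^{-\frac i2\arg\tau}X$; this is harmless because the disk is rotation invariant. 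Consequently Haagerup--Thorbj\o rnsen and Schultz do not literally supply the strong convergence you use. You need a result covering such real-structured Gaussian ensembles, e.g.\ the intrinsic-freeness theory of Bandeira--Boedihardjo--van Handel.

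\emph{The norm bound is off.} The triangle inequality gives only $\limsup_N\magn{X_N}\le 2(a+b)\le2\sqrt2$, not $2$. Since $2\sqrt2<3$, your first step survives with $1/2$ replaced by $3-2\sqrt2$. In fact $\magn{x_\rho}=2$ for every $\rho$: every non-crossing pairing of the word $x^*x\cdots x^*x$ pairs an $x$ with an $x^*$, so $x_\rho^*x_\rho$ is free Poisson of rate $1$.

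\emph{The step you call the main obstacle is not closed by either route you offer.} This is the most substantive issue.
\begin{enumerate}
\item The support of a Brown measure can be strictly smaller than the spectrum; this already happens for non-invertible $R$-diagonal elements.
\item Haagerup--Larsen concerns $R$-diagonal elements, and $x_\rho$ is not $R$-diagonal for $\rho\neq0$.
\item A vanishing $\eta$-component at spectral parameter $0$ shows only that $\nuel_{z,\rho}$ has no density at $0$, not that it has a gap there.
\end{enumerate}
Here is a short argument that does close it. On the full Fock space of $\C^2$ with left creation operators $\ell(\cdot)$, set $u=ae_1+ibe_2$ and $v=ae_1-ibe_2$. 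With $s_j=\ell(e_j)+\ell(e_j)^*$ one checks $a s_1+ibs_2=\ell(u)+\ell(v)^*$ and $\ell(v)^*\ell(u)=\langle u,v\rangle=\rho$. Hence for every $\lambda\neq0$,
\[x_\rho-(\lambda+\rho/\lambda)=(\ell(v)^*-\lambda)\,(1-\lambda^{-1}\ell(u)).\]
The map $\lambda\mapsto\lambda+\rho/\lambda$ sends $\set{\abs\lambda>1}$ onto the exterior of exactly $\calE_{\sqrt{4\rho},1+\abs\rho}$ (a segment when $\abs\rho=1$). So for $z\notin\calE_{\sqrt{4\rho},1+\abs\rho}$ there is $\lambda$ with $\abs\lambda>1$ and $\lambda+\rho/\lambda=z$. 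Both factors are then invertible by Neumann series, since $\magn{\ell(u)}=\magn{\ell(v)}=1$. Spectra do not depend on the ambient $C^*$-algebra, so $x_\rho-z$ is invertible in $W^*(s_1,s_2)$. With these repairs, the rest of your proof goes through.
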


\begin{theorem}\label{a11}
    Let $\A_\oned$, $\A_\twod$, $\sigma^2$ be as in \Cref{a10}. If $\sigma(0,0)=0$, set $\A=\set0\cup\A_\oned\cup\A_\twod$ otherwise set $\A=\A_\oned\cup\A_\twod$.
    Assume $P$ has jointly Gaussian coefficients.
    For each $\eps>0$,
    \[\lim_{N\to\infty}\#\pare{\Lambda(P\at N)\cap\set{z\in\C : \dist(z,\A)>\eps}}=0\]
almost surely.
\end{theorem}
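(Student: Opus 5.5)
The plan follows the strategy announced at the start of this section: first rule out large eigenvalues, then run a net argument over a compact region avoiding $\A$.

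\textbf{Step 1: large eigenvalues.} Choose $R$ large, depending on $\eps$ and on the coefficient variances, so that there are a.s. eventually no eigenvalues with $\abs z>R$. For $\abs z\ge R\ge 1$,
\[
\smagn{Q(z)}\le \sum_\ell \abs z^\ell\smagn{A_\ell}\le d\abs z^{d-1}\max_\ell\smagn{A_\ell}.
\]
For Gaussian elliptic matrices, $\smagn{A_\ell}$ is a.s. eventually bounded by a constant $C$, for instance by \Cref{a44} applied to $A_\ell$ with large $\abs w$, or by standard norm bounds. Hence $\sigma_N(P(z))\ge \abs z^d-dC\abs z^{d-1}>0$ once $\abs z>dC$.

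\textbf{Step 2: the compact region.} Let
\[
K=\set{z:\abs z\le R,\ \dist(z,\A)\ge\eps}.
\]
The goal is a single $c>0$ with $\liminf_N\inf_{z\in K}\sigma_N(P(z))\ge c$ almost surely.

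Fix $z_0\in K$. Then $z_0^d$ lies at positive distance $\eta(z_0)$ from the ellipse $\calE_{z_0}=\calE_{\sqrt{4g(z_0)},\,\cdots}$ attached to $Q(z_0)$. This should follow from the characterization of $\A_\oned\cup\A_\twod$ in \Cref{a10}:
\begin{itemize}
\item If $\abs{\rho(z_0)}<1$, then $z_0\notin\A_\twod$ means $z_0^{d-m}$ lies outside the ellipse attached to $\wt Q$.
\item If $\abs{\rho(z_0)}=1$, then $z_0\notin\A_\oned$ means it lies off the segment.
\end{itemize}
The factor $z^m$ is handled by working with $\wt P$, using $\sigma_N(P(z))\ge\abs z^m\sigma_N(\wt P(z))$ and $\abs z\ge\eps$ when $0\in\A$. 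When $m>0$ but $\sigma(0,0)$ is not zero this case does not arise.

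Since $\calE_z$ varies continuously in $z$ (its foci and semi-axes are continuous functions of $\sigma^2,g$ where $\sigma^2>0$), $\eta$ is lower semicontinuous and positive on $K$. By compactness it has a positive minimum $\eta_0$.

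Now cover $K$ by a finite net $\{z_k\}$ of mesh $\delta$. At each $z_k$, normalize $Q(z_k)$ as in the proof of \Cref{a25}: $\sigma^{-1}e^{-\frac i2\arg\rho}Q(z_k)$ is Gaussian and satisfies \Cref{a19} with real correlation. \Cref{a44} then gives $\liminf_N\sigma_N(z_k^d-Q(z_k))\ge c_k$ almost surely. Taking the minimum over finitely many points gives an a.s. event with a common $c$.

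For $z$ with $\abs{z-z_k}<\delta$, Lipschitz control gives
\[
\sigma_N(P(z))\ge\sigma_N(P(z_k))-\smagn{P(z)-P(z_k)}\ge c-L\delta,
\]
where $L$ bounds $\smagn{P'}$ on $\disk(0,R+1)$, and is a.s. eventually $\le d R^{d-1}+\sum_\ell \ell R^{\ell-1}C$. Choosing $\delta<c/(2L)$ finishes the argument.

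\textbf{Main obstacle.} The circularity is that $c$ depends on the net, and the net depends on $c$. To break it, I would get a uniform $c$ over $K$ first. I would try to make the constant in \Cref{a44} depend continuously on $(\sigma,\rho,\eta)$. Alternatively, I would pick a finite open cover of $K$ by balls $B_k$ on which $\dist(z^d,\calE_{z_k})\ge\eta_0/2$ for all $z\in B_k$, and use \Cref{a44}'s uniformity in the spectral variable $w$ for the fixed matrix $Q(z_k)$.

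Combining the two via
\[
\sigma_N(z^d-Q(z))\ge\sigma_N(z^d-Q(z_k))-\smagn{Q(z)-Q(z_k)}
\]
then reduces everything to a finite collection of applications of \Cref{a44}, each uniform in $w$ outside an $\eta_0/2$-neighborhood of its ellipse. This requires shrinking the balls so that $\smagn{Q(z)-Q(z_k)}$ is small compared with those constants, and I expect this compactness and semicontinuity bookkeeping to be the delicate point. A second delicate point is degenerate points where $\sigma^2(z_0)=0$, for example $z_0=0$ when $m>0$. These are excluded from $K$ by the definition of $\A$, but points where $\sigma^2$ vanishes elsewhere would need $Q(z_0)\approx0$ and $\abs{z_0^d}$ bounded below.
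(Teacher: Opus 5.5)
Your overall route is the paper's: a norm bound excludes $\abs z>2d\max_\ell\smagn{A_\ell}$, and on the remaining compact set you combine the pointwise bound of \Cref{a44} (after normalizing $Q(z)$) with Lipschitz dependence of $P(z)$ on $z$. The gap is that you leave the step you call the main obstacle unresolved, and neither remedy closes it as stated. Remedy (a) would require reproving Han's estimate, because \Cref{a44} as cited says nothing about how $c$ depends on $\sigma$, $\rho$ and the margin. Remedy (b) fixes a finite cover first and then ``shrinks the balls'' until $\smagn{Q(z)-Q(z_k)}$ is small relative to $c_k$. Shrinking the balls of an already chosen finite cover can destroy the covering, so this is the same circularity again.

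The circularity disappears if each radius is a function of its own center, chosen \emph{before} compactness is invoked; this is exactly what the paper does.
\begin{itemize}
\item On the a.s.\ event that $\max_\ell\smagn{A_\ell}\le C$ for all large $N$, the map $z\mapsto P(z)$ is $L$-Lipschitz in operator norm on $\disk(0,R')$ with $L$ deterministic. Hence $\abs{\sigma_N(P(w))-\sigma_N(P(z))}\le L\abs{w-z}$ there.
\item For each $z\in K$, \Cref{a44} gives $c_z>0$ with $\liminf_N\sigma_N(P(z))\ge c_z$ a.s., and $c_z$ depends on $z$ alone.
\item The open balls $\disk(z,c_z/(2L))$, $z\in K$, cover the compact set $K$, so finitely many of them, with centers $z_1,\dots,z_n$, already do.
\item Almost surely, for all large $N$, $\sigma_N(P(z_k))>c_{z_k}/2$ for every $k$. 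Then $\sigma_N(P(w))>c_{z_k}/2-L\abs{w-z_k}>0$ on each ball $\disk(z_k,c_{z_k}/(2L))$, so $K$ contains no eigenvalues.
\end{itemize}
You need neither a uniform $c$, nor $\eta_0$, nor uniformity of \Cref{a44} in the spectral variable. The pointwise statement at $w=z^d$ suffices because $\sigma_N(P(\cdot))$ is itself Lipschitz.

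Two smaller points. First, the detour through $\wt P$ is unnecessary: for $z\neq0$ the ellipse attached to $Q(z)$ is $z^m$ times the one attached to $\wt Q(z)$, so \Cref{a44} applies to $Q(z)$ directly. The detour is also slightly inaccurate, since $P(z)=z^m\wt P(z)$ fails when the diagonals of $A_\ell$, $\ell<m$, are nonzero; this leaves an error $\sum_{\ell<m}\abs z^\ell\smagn{A_\ell}$ that only vanishes in the limit. Second, you correctly flag points $z\neq0$ with $\sigma^2(z,\bar z)=0$, where \Cref{a44} cannot be normalized. There $Q(z)$ is a.s.\ diagonal with entries of variance $O(1/N)$, so $\liminf_N\sigma_N(P(z))\ge\abs z^d$ supplies $c_z$, and the same cover argument goes through.
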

\newcommand{\G}{\mathcal G}
\begin{proof}
Let $\calE_z=\calE_{\sqrt{4g(z)},\sigma(z,\bar z)+\abs{g(z)}/\sigma(z,\bar z)}$ be the ellipse to which the spectral density of $Q(z)$ is converging.
It suffices to show for every $\alpha>0$ that
\[\Pr\pare{
\limsup_{N\to\infty}\#\pare{\Lambda(P\at N)\cap\set{z\in\C : \dist(z,\A)>\delta}}\ge1
}\le2\alpha.\]
Let $R>1$ be a large enough constant such that $\limsup_{N\to\infty}\max_{0\le k<d}\magn{A_k}\le R$ with probability $1-\alpha$. Call this event $E$. Under this event, all eigenvalues of $Q(z)$ are contained in a disk of radius
\[\sum_{k=0}^{d-1}\magn{A_k}\abs z^k\le dRz^{d-1}\]
In particular, this excludes $z^d$ when $z\ge2dR$, which means
\[E\implies \limsup_{N\to\infty}\#\pare{\Lambda(P)\cap(\C\backslash\disk(0,2dR))}=0.\]
Let $R'=2dR$.
Next, notice that $\A=\set{z\in\C:\dist(z^d,\calE_z)=0}$.
Let
\[\delta=\frac12\inf\set{ \dist(z^d,\calE_z):z\in\C,\dist(z,\A)\ge\eps }.\]
We claim that $\delta>0$. To see this, note that $\dist(z^d,\calE_z)$ is continuous in $z$, that $\dist(z^d,\calE_z)$ tends to infinity as $z$ tends to infinity, and that $\set{z\in\C:\dist(z,\A)\ge\eps}$ restricted to any compact set is compact. The infimum is therefore achieved; it cannot be achieved for $0$ since $\dist(z^d,\calE_z)=0$ would imply $\dist(z,\A)=0$. That finishes the claim.
Let\[\F=\set{z:\dist(z^d,\calE_z)\le\delta}.\]
By selection of $\delta$, if $\dist(z,\A)\ge\eps$ then $\dist(z^d,\calE_z)>\delta$ so $z\not\in\F$.
In this case, \Cref{a44} states that there exists $c_z$ such that \(\liminf_{N\to\infty}\sigma_N(z^d - Q(z))\ge c_z\) almost surely.
Then for $w,z\in\disk(0,R')$,
\spliteq{}{
E\implies \limsup_{N\to\infty}\abs{\sigma_N(w^d-Q(w)) - \sigma_N(z^d-Q(z))}
  &\le\magn{w^d-z^d-Q(w)+Q(z)}
\\&\le\abs{w^d-z^d}+R\sum_{k=0}^{d-1}\sabs{w^k-z^k}
\\&\le\abs{w-z}(R')^d
}
Set $L=(R')^d$. Let $\G$ be the closure of $\disk(0,R')\backslash\F$. Consider the open cover of $\G$,
\[\bigcup_{z\in\G}\disk(z,c_z/(2L))\supset\G.\]
Since $\G$ is compact, there exists a finite sub-cover $S$,
\[\bigcup_{z\in S}\disk(z,c_z/(2L))\supset\G.\]
Since $S$ is finite,
\[\liminf_{N\to\infty}\min_{z\in S}\frac{\sigma_N(z^d-Q(z))}{c_z}\ge1\]
almost surely. Pick $N_0$ large enough so that
\[\Pr\pare{\inf_{N\ge N_0}\min_{z\in S}\frac{\sigma_N(z^d-Q(z))}{c_z}>\frac12}\ge1-\alpha.\]
Call this event $E'$.
Now, observe that $\sigma_N(z^d-Q(z))>\frac{c_z}2$ implies $\sigma_N(w^d-Q(w))>0$ for $w\in\disk(z,c_z/(2L))$, which means this disk contains no eigenvalues of $P$. These disks cover $\disk(0,R')\backslash\mathcal F$ so
\[E\cap E'\implies \limsup_{N\to\infty}\#\pare{\Lambda(P)\cap(\disk(0,2dR)\backslash\F)}=0.\]
Since $\Pr(E\cap E')\ge1-2\alpha$ by the union bound, the desired result follows.
\end{proof}

\section{Numerical Depictions}
In this section, we provide numerical data which shows visually the densities $\mu_P$ corresponding to different elliptic polynomial matrix models. There are very many natural models one may devise. Our next Proposition shows that if one wants to capture all possible limiting spectral distributions, it suffices to consider only polynomial matrices where each matrix coefficient $A_\ell$ is a fixed linear combination of some common collection of independent GUE matrices.

\begin{proposition}\label{a31}
For each elliptic polynomial matrix $P$, there exists a matrix $C\in\C^{d\times2d}$ such that $\wt P$ defined by
\eq{\label{a45}\wt P(z)=z^d - \sum_{\ell=0}^{d-1}\sum_{j=1}^{2d}C_{\ell,j} B_jz^\ell}
where $B_1,\ldots,B_{2d}$ are independent GUE matrices
satisfies
\[\lim\limits_{N\to\infty}\mu_P=\lim\limits_{N\to\infty}\mu_{\wt P}.\]
Equivalently, there are polynomials $\set{c_j}_{j=1}^{2d}$ of degree at most $d-1$ with
\[\wt P(z)=z^d-\sum_{j=1}^{2d}c_j(z)B_j.\]
\end{proposition}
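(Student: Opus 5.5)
The plan is to use \Cref{a8} as a black box. The limiting measure $\lim_{N\to\infty}\mu_P$ depends only on the two functions $\sigma^2(z,\bar z)$ and $g(z)$ of \Cref{a29}. So it suffices to produce polynomials $c_1,\dots,c_{2d}$ of degree at most $d-1$ such that $\wt P$ satisfies C1-P and has the same $\sigma^2$ and $g$ as $P$. Throughout I assume, as \Cref{a8} requires, that $P$ satisfies C1-P; the elliptic structure alone is what is used to build $C$.

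\textbf{Computing $\sigma^2$ and $g$ for $\wt P$.} For independent GUE matrices $B_j$ and $i<j'$ we have $N\E|(B_j)_{ij'}|^2=1$, $N\E (B_j)_{ij'}(B_j)_{j'i}=1$ and $\E (B_j)_{ij'}^2=0$. Hence for $\wt Q(z)=\sum_j c_j(z)B_j$ we get
\[
\wt\sigma^2(z,\bar z)=\sum_j|c_j(z)|^2,\qquad \wt g(z)=\sum_j c_j(z)^2 .
\]
The pseudo-covariances $\E \wt Q_{ij'}(z)^2$ and $\E \wt Q_{ij'}(z)\conj{\wt Q_{j'i}(z)}$ vanish. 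This puts $\wt Q(z)$ in \Cref{a19} with $\tau=0$, and independence across index pairs is inherited from the GUEs, so $\wt P$ satisfies C1-P.

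\textbf{Setting up the matrices for $P$.} Let $v(z)=(1,z,\dots,z^{d-1})^\top$. Write $a=(x_{ij\ell})_\ell$ and $b=(x_{ji\ell})_\ell$ for the coefficient vectors of an off-diagonal pair, so that $\sqrt N Q_{ij}(z)=a^\top v$ and $\sqrt N Q_{ji}(z)=b^\top v$. Set $M_a=\E \bar a a^\top$, $M_b=\E \bar b b^\top$ and $K=\E ab^\top$. Then
\[
\sigma^2=v^*M_av=v^*M_bv,\qquad g=v^\top Kv=v^\top K_sv,\qquad K_s=\tfrac12(K+K^\top).
\]
The monomials $\bar z^{i}z^{k}$ are linearly independent. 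The identity $v^*M_av=v^*M_bv$ for all $z$ therefore forces $M_a=M_b=:M$.

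\textbf{Finding a Gaussian vector with these covariances.} I want a complex random vector $Y\in\C^d$ with $\E \bar YY^\top=M$ and $E YY^\top=K_s$; the sign/conjugation conventions should be matched consistently with those above. The candidate comes from the covariance matrices of $(a;\bar b)$ and of $(b;\bar a)$. Both are positive semidefinite, of the form $\left[\begin{smallmatrix}\cdot&K\\ \cdot&\cdot\end{smallmatrix}\right]$ and $\left[\begin{smallmatrix}\cdot&K^\top\\ \cdot&\cdot\end{smallmatrix}\right]$ respectively. Their diagonal blocks coincide because $M_a=M_b$. Averaging the two yields a positive semidefinite matrix of augmented-covariance form, with the Hermitian block coming from $M$ and the symmetric block $K_s$. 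Any positive semidefinite augmented covariance is the augmented covariance of the real Gaussian vector $(\Re Y,\Im Y)\in\R^{2d}$. Taking a real square root of that $2d\times 2d$ covariance gives $Y=A\zeta$ with $A\in\C^{d\times2d}$ and $\zeta$ a standard real Gaussian in $\R^{2d}$. This means $AA^*$ and $AA^\top$ reproduce $M$ (up to the conjugation convention) and $K_s$.

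\textbf{Defining the $c_j$ and concluding.} Set $C=A$ in \Cref{a45}, i.e. $c_j(z)=\sum_\ell A_{\ell j}z^\ell$, so that $c(z)=A^\top v(z)$. Then
\[
\sum_j|c_j|^2=v^\top A\bar A^{\top}\bar v=\sigma^2(z,\bar z),\qquad \sum_j c_j^2=v^\top AA^\top v=v^\top K_sv=g(z).
\]
Applying \Cref{a8} to both $P$ and $\wt P$ then gives equal limits.

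The main obstacle is the positive semidefiniteness step. One must check carefully that the averaged matrix really has the augmented form, with its lower-right block equal to the conjugate of its upper-left block. This is precisely where $M_a=M_b$ (from ellipticity) and the symmetrization of $K$ are used. Keeping the conjugation conventions consistent between this matrix and the formulas for $\wt\sigma^2$ and $\wt g$ is the other point needing care.
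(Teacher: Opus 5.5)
Your proposal is correct and follows essentially the paper's route. The paper likewise averages the covariances of $(a;\bar b)$ and $(b;\bar a)$ into a positive semidefinite doubled-up matrix $\left[\begin{smallmatrix}\Sigma & S\\ \bar S & \bar\Sigma\end{smallmatrix}\right]$, factors it so that $CC^*=\Sigma$ and $CC^\top=S$, and concludes via Theorem~\ref{a8}. The differences are cosmetic: the paper averages the diagonal blocks as well, so it never needs your step $M_a=M_b$, and it builds $C$ from a structure-preserving square root via functional calculus rather than from a real square root of the covariance of $(\Re Y,\Im Y)$.
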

\begin{proof}
Let $\sigma^2_P$ and $g_P$ (resp. $\sigma^2_{\wt p}$ and $g_{\wt P}$) be the polynomials defined in \Cref{a29} corresponding to $P$ (resp. $\wt P$). By \Cref{a8}, it suffices to show that $\sigma^2_{\wt P}=\sigma^2_P$ and $g_{\wt P}=g_P$.
Define the matrices $\Sigma$ and $S$
by the entries
\[
\Sigma_{\ell,\ell'}
=N\E\sqbrac{\frac{(A_\ell)_{12}\conj{(A_{\ell'})_{12}}+(A_\ell)_{21}\conj{(A_{\ell'})_{21}}}2}
,\quad
S_{\ell,\ell'}=N\E\sqbrac{\frac{(A_\ell)_{12}(A_{\ell'})_{21}+(A_\ell)_{21}(A_{\ell'})_{12}}2}.\]
Then the random vectors
\spliteq{}{
v:=\sqrt{N/2}\pmat{ (A_0)_{12} & (A_1)_{12} & \cdots & (A_{d-1})_{12} & \conj{(A_0)_{21}} & \conj{(A_1)_{21}} & \cdots & \conj{(A_{d-1})_{21}} }^\top\in\C^{2d},
\\
u:=\sqrt{N/2}\pmat{ (A_0)_{21} & (A_1)_{21} & \cdots & (A_{d-1})_{21} & \conj{(A_0)_{12}} & \conj{(A_1)_{12}} & \cdots & \conj{(A_{d-1})_{12}} }^\top\in\C^{2d}
.}
have covariance structure
\eq{\label{a46}\E(vv^*+uu^*)
= \bmat{ \Sigma & S \\ S^* & \conj\Sigma }
= \bmat{ \Sigma & S \\ \conj S & \conj\Sigma }.}
Matrices of this form are called \textit{doubled-up} \cite{b17}, and they always admit a matrix $C$ such that $CC^*=\Sigma$ and $CC^\top=S$ \cite{b18}. To be explicit, one can verify that the set of such matrices is closed under application of polynomials with real coefficients, and so by the functional calculus, the square root of $\E(vv^*+uu^*)$ has the same form. That is, there are $U$ and $V$ with
\[
\E(vv^*+uu^*)=
\bmat{ U & V \\ \conj V & \conj U }
\bmat{ U & V \\ \conj V & \conj U }^*.\]
Set $C=\frac1{\sqrt2}\bmat{U+V & (U-V)i}$.
This way,
$\Sigma=CC^*$ and $S=CC^\top$. Then, one can directly compute that
\[\sigma_{\wt P}^2(z,\bar z)=\sum_{\ell,\ell'}(CC^*)_{\ell,\ell'}z^\ell\bar z^{\ell'}=\sigma_P^2(z,\bar z),\qand
g_{\wt P}(z)=\sum_{\ell,\ell'}(CC^\top)_{\ell,\ell'}z^{\ell+\ell'}=g_P(z).\]
\end{proof}

With this in place, we have a convenient way of specifying an elliptic polynomial matrix: a $d\times 2d$ complex generation matrix, $C$, corresponding to the polynomial matrix in \Cref{a45}. For many of these plots, we will specify a $C$ with fewer than $2d$ columns; these are implicitly padded with 0s. In all of the plots in this section, $d$ is the degree of $P=P\at N$ and $N=\floor{4000/d}$. Unless otherwise specified, the plotted eigenvalues are for a single sample of $P\at N$. In all plots, the window is centered at 0 and tick marks are placed at the integers.
Eigenvalues are computed via the companion linearization and NumPy's \alg{numpy.linalg.eig} function, using the reference implementation of \cite{b19}.

\begin{figure}[H]
\centering
\begin{tikzpicture}[x=\linewidth/13, y=\linewidth/13, cell/.style={}, collabel/.style={align=center}, rowlabel/.style={rotate=90, align=center}]
\newlength{\gridx}
\newlength{\xgridx}
\setlength{\gridx}{\linewidth}
\divide\gridx by 13
\setlength{\xgridx}{\gridx}
\multiply\xgridx by 2
\foreach \c/\name in {0/{$d=2$}, 1/{$d=3$}, 2/{$d=4$}, 3/{$d=5$}, 4/{$d=6$}, 5/{$d=7$}}{\node[collabel] at (1+2*\c,0.25) {\name};}
\foreach \r/\name in { 0/{Scatter}, 1/{Histogram}, 2/{Exact}}{\node[rowlabel] at (-0.25,-1-2*\r) {\name};}
\foreach \di/\dval in {0/2, 1/3, 2/4, 3/5, 4/6, 5/7}{
\node[cell] (fig) at (1+2*\di,-1) {\includegraphics[width=\xgridx,keepaspectratio]{verify_scatter_d\dval.png}};
\node[cell] (fig) at (1+2*\di,-3) {\includegraphics[width=\xgridx,keepaspectratio]{verify_histogram_d\dval.png}};
\node[cell] (fig) at (1+2*\di,-5) {\includegraphics[width=\xgridx,keepaspectratio]{verify_exact_d\dval.png}};
\node[cell] (fig) at (1+2*\di,-6.5)
{\includegraphics[width=\xgridx,keepaspectratio]{verify_real_d\dval.png}};
}
\end{tikzpicture}
\caption{
$C=\bmat{I&0}$. Row 1: scatter plots of just the non-real eigenvalues. Row 2: 2d-histogram of non-real eigenvalues from 100 independent copies of $P$. Row 3: density determined by \Cref{a35} (with numerical computation of the Laplacian via finite differences).
Row 4: normalized histogram of the real eigenvalues superimposed with the density determined by \Cref{a35}.
}
\label{a36}
\end{figure}

\begin{figure}[H]
\centering
\begin{tikzpicture}[x=\linewidth/15, y=\linewidth/15, cell/.style={}, collabel/.style={align=center}, rowlabel/.style={rotate=90, align=center}]
\newlength{\gridxb}
\newlength{\xgridxb}
\setlength{\gridxb}{\linewidth}
\divide\gridxb by 15
\setlength{\xgridxb}{\gridxb}
\multiply\xgridxb by 2
\foreach \c/\name in {0/{$d=10$}, 1/{$d=20$}, 2/{$d=30$}, 3/{$d=40$}, 4/{$d=50$}, 5/{$d=100$}}{\node[collabel] at (1+2*\c,0.25) {\name};}
\foreach \r/\name in { 0/{Scatter}, 1/{Exact}}{\node[rowlabel] at (-0.25,-1-2*\r) {\name};}
\foreach \di/\dval in {0/10, 1/20, 2/30, 3/40, 4/50, 5/100}{
\node[cell] (fig) at (1+2*\di,-1) {\includegraphics[width=\xgridxb,keepaspectratio]{verify_scatter_d\dval.png}};
\node[cell] (fig) at (1+2*\di,-3) {\includegraphics[width=\xgridxb,keepaspectratio]{verify_exact_d\dval.png}};
}
\end{tikzpicture}
\caption{Comparison of empirical data with \Cref{a12}. The matrices and plots are the same as in \Cref{a36}, but with larger $d$ values.}
\label{a47}
\end{figure}

\begin{figure}[H]
\centering
\begin{tikzpicture}[x=\linewidth/16, y=\linewidth/16, cell/.style={}, collabel/.style={align=center}, rowlabel/.style={align=left, anchor=west}]
\setlength{\gridx}{\linewidth}
\divide\gridx by 16
\setlength{\xgridx}{\gridx}
\multiply\xgridx by 2
\foreach \c/\name in {0/{$d=2$}, 1/{$d=3$}, 2/{$d=4$},
3/{$d=5$},
4/{$d=6$},
5/{$d=7$},
6/{$d=8$}%
}{\node[collabel] at (1+2*\c,0.25) {\name};}
\foreach \r/\name in {
0/{$\rho=1$},
1/{$\rho=0.92$},
2/{$\rho=0.71$},
3/{$\rho=0.38$},
4/{$\rho=0$},
5/{$\rho=-0.38$},
6/{$\rho=-0.71$},
7/{$\rho=-0.92$},
8/{$\rho=-1$}
}{
\node[rowlabel] at (-1.75,-1-2*\r) {\name};
\foreach \di/\dval in {0/2, 1/3, 2/4, 3/5, 4/6, 5/7, 6/8}
    \node[cell] (fig) at (1+2*\di, -1-2*\r) {\includegraphics[width=\xgridx,keepaspectratio]{rho_real_d\dval_m\r.png}};
}
\end{tikzpicture}
\caption{ $C=I\otimes\pmat{\cos(\theta) & i\sin(\theta)}$ with $\rho=\cos(2\theta)$. This corresponds to each $A_j$ being independent and elliptic with variance 1 and correlation $\rho\in\R$. }
\label{a48}
\end{figure}

\begin{figure}[H]
\centering
\begin{tikzpicture}[x=\linewidth/16, y=\linewidth/16, cell/.style={}, collabel/.style={align=center}, rowlabel/.style={align=left, anchor=west}]
\setlength{\gridx}{\linewidth}
\divide\gridx by 16
\setlength{\xgridx}{\gridx}
\multiply\xgridx by 2
\foreach \c/\name in {0/{$d=2$}, 1/{$d=3$}, 2/{$d=4$},
3/{$d=5$},
4/{$d=6$},
5/{$d=7$},
6/{$d=8$}%
}{\node[collabel] at (1+2*\c,0.25) {\name};}
\foreach \r/\name in {
0/{$\rho=1$},
1/{$\rho=\frac{1+i}{\sqrt2}$},
2/{$\rho=i$},
3/{$\rho=\frac{-1+i}{\sqrt2}$},
4/{$\rho=-1$},
5/{$\rho=\frac{-1-i}{\sqrt2}$},
6/{$\rho=-i$},
7/{$\rho=\frac{1-i}{\sqrt2}$},
8/{$\rho=1$}
}{
\node[rowlabel] at (-1.5,-1-2*\r) {\name};
\foreach \di/\dval in {0/2, 1/3, 2/4, 3/5, 4/6, 5/7, 6/8}
    \node[cell] (fig) at (1+2*\di, -1-2*\r) {\includegraphics[width=\xgridx,keepaspectratio]{rho_circle_d\dval_m\r.png}};
}
\end{tikzpicture}
\caption{$C=\sqrt{\rho}I$. This corresponds to each $A_j$ being independent and elliptic with variance 1 and correlation $\rho$, for $\abs\rho=1$.}
\label{a49}
\end{figure}

\newlength{\cellwidth}

\newcommand{\ninepanel}[2]{
\begin{figure}[H]
    \centering
    \setlength{\cellwidth}{\dimexpr\linewidth/7\relax}
    \begin{tikzpicture}[
        x=\cellwidth,
        y=1cm,
        collabel/.style={
            anchor=center,
            inner sep=0pt
        }
    ]
        \foreach \a/\aa in {
            0/{0},
            1/{1/6},
            2/{1/3},
            3/{1/2},
            4/{2/3},
            5/{5/6},
            6/{1}
        }{%
            \node[
                anchor=north,
                inner sep=0pt
            ] at ({\a+0.5},0) {%
                \includegraphics[
                    width=\cellwidth,
                    keepaspectratio
                ]{#1_\a_o_6.png}%
            };
            \node[collabel] at ({\a+0.5},0.2) {$a=\aa$};
        }

    \end{tikzpicture}
    \captionsetup{skip=4pt}
    #2
    \label{#1}
\end{figure}
}

\ninepanel{10_r}{
    \caption{$C=\pmat{1 & 0}^{\top}
    \pmat{1 & ai & 0 & \cdots & 0}$}
}

\ninepanel{100_r}{
    \caption{$C=\pmat{1 & 0 & 0}^{\top}
    \pmat{1 & ai & 0 & \cdots & 0}$}
}

\ninepanel{11_r}{
    \caption{$C=\pmat{1 & 1}^{\top}
    \pmat{1 & ai & 0 & \cdots & 0}$}
}

\ninepanel{ii_r}{
    \caption{$C=\pmat{i & i}^{\top}
    \pmat{1 & ai & 0 & \cdots & 0}$}
}

\ninepanel{iii_r}{
    \caption{$C=\pmat{i & i & i}^{\top}
    \pmat{1 & ai & 0 & \cdots & 0}$}
}

\ninepanel{1ii_r}{
    \caption{$C=\pmat{1 & i & i}^{\top}
    \pmat{1 & ai & 0 & \cdots & 0}$}
}

\ninepanel{10i_r}{
    \caption{$C=\pmat{1 & 0 & i}^{\top}
    \pmat{1 & ai & 0 & \cdots & 0}$}
}

\ninepanel{i1i_r}{
    \caption{$C=\pmat{i & 1 & i}^{\top}
    \pmat{1 & ai & 0 & \cdots & 0}$}
}

\begin{figure}[H]
    \centering
    \setlength{\cellwidth}{\dimexpr\linewidth/7\relax}
    \begin{tikzpicture}[
        x=\cellwidth,
        y=1cm,
        collabel/.style={
            anchor=center,
            inner sep=0pt
        }
    ]
        \foreach \a/\aa in {
            0/{2},
            1/{3},
            2/{4},
            3/{5},
            4/{6},
            5/{7},
            6/{8}}{%
            \node[
                anchor=north,
                inner sep=0pt
            ] at ({\a+0.5},0) {%
                \includegraphics[
                    width=\cellwidth,
                    keepaspectratio
                ]{dft_d_\aa.png}%
            };
            \node[collabel] at ({\a+0.5},0.2) {$d=\aa$};
        }

    \end{tikzpicture}
    \captionsetup{skip=4pt}
    \caption{$C=\bmat{F_d&0}$ where $F_d$ the $d\times d$ unitary discrete Fourier transform matrix.}
\end{figure}

\begin{figure}[H]
    \centering
    \setlength{\cellwidth}{\dimexpr\linewidth/7\relax}
    \begin{tikzpicture}[
        x=\cellwidth,
        y=1cm,
        collabel/.style={
            anchor=center,
            inner sep=0pt
        }
    ]
        \foreach \a/\aa in {
            0/{2},
            1/{3},
            2/{4},
            3/{5},
            4/{6},
            5/{7},
            6/{8}}{%
            \node[
                anchor=north,
                inner sep=0pt
            ] at ({\a+0.5},0) {%
                \includegraphics[
                    width=\cellwidth,
                    keepaspectratio
                ]{gadget_d_\aa.png}%
            };
            \node[collabel] at ({\a+0.5},0.2) {$d=\aa$};
        }

    \end{tikzpicture}
    \captionsetup{skip=4pt}
    \caption{$C=\bmat{I&iP\\P&-iI}$ for even $d$ where $P$ is the permutation which reverses coordinate order. For odd $d$, insert a row and column with a $\sqrt2$ in the center in between the blocks. In particular, $\frac12CC^*=I$ and $\frac12CC^\top$ is a larger version of $P$.}
\end{figure}

\newcommand{\onepanel}[3]{
\begin{subfigure}[t]{#3\textwidth}
    \centering
    \includegraphics[width=\linewidth]{#1.png}
    \caption{#2}
\end{subfigure}}

\begin{figure}[H]
    \centering
    \onepanel{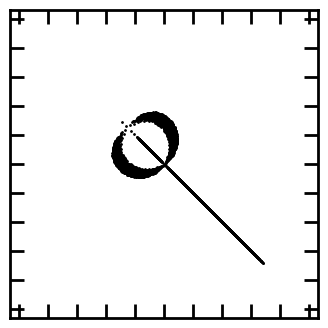}{$C=\pmat{2i & 2i\\-1+i&-1+i}$}{.32}
    \hfill
    \onepanel{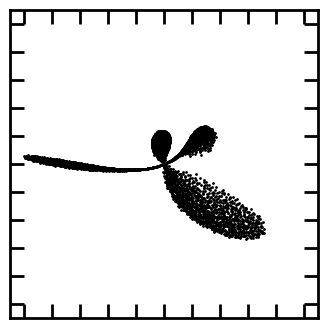}{$C=\pmat{-2-2i & -1-i\\1-i&2}$}{.32}
    \hfill
    \onepanel{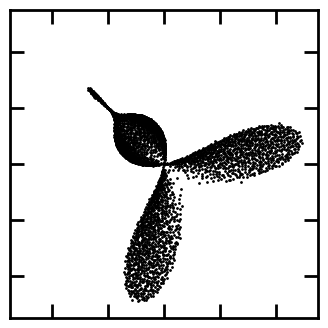}{$C=\pmat{-1&1\\-i&1}$}{.32}
\end{figure}

\begin{figure}[H]
    \centering
    \onepanel{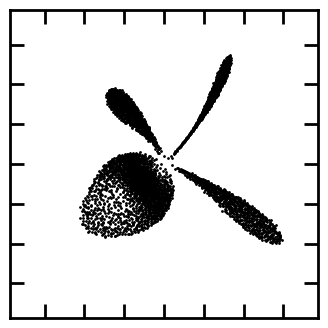}{$C=\pmat{1-i & -2+2i & -1+i\\-2&-1+3i&i\\1&0&i}$}{.32}
    \hfill
    \onepanel{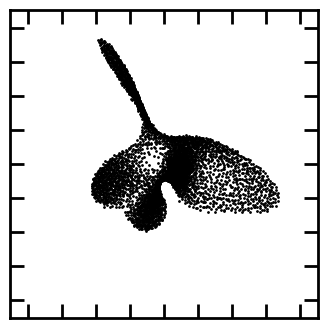}{$C=\pmat{-i & -2 & 1\\-1&3+i&-i\\-i&1-i&1}$}{.32}
    \hfill
    \onepanel{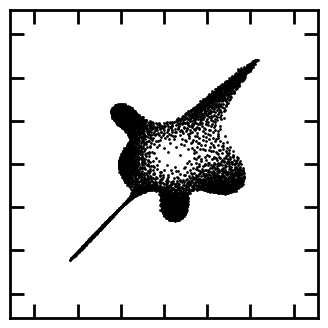}{$C=\pmat{1-i&-2&1-i\\0&-1+i&-i\\-1-i&0&0}$}{.32}
\end{figure}

\begin{figure}[H]
    \centering
    \onepanel{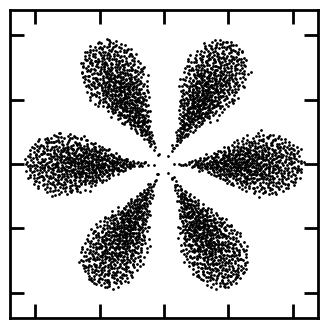}{$C=\pmat{2&2&2\\0&0&0\\-e^{\frac43\pi i}&-e^{\frac23\pi i}&-1}$}{.32}
    \hfill
    \hfill
    \onepanel{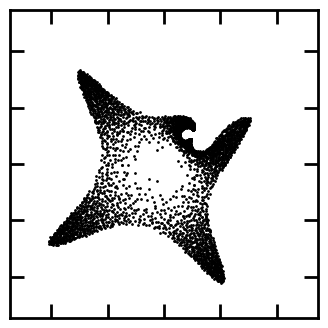}{$C=\pmat{-i&-i&1-i\\1+i&1+i&2i\\0&0&0}$}{.32}
    \hfill
    \onepanel{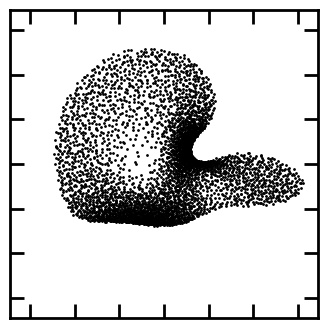}{$C=\pmat{2&1&-2i\\-2&1-i&2i\\i&1&1}$}{.32}
\end{figure}

\section*{AI Usage Statement}
The majority of this work was completed by April of 2026, without the use if LLMs. LLMs were subsequently used to check the results and search for related works and references. Several errors were identified via LLMs and manually corrected. LLM output identified that the covariance matrix appearing in \Cref{a46} was of a form that had been studied. The references it provided led to the simplification of the proof of \Cref{a31}. LLMs were used to generate a first draft of the abstract, though the version finally appearing has been heavily edited. LLMs were used to produce example TikZ figures sufficiently similar for our intentions for easy manual adaptation.

\bibliographystyle{alpha}
\bibliography{outbib}

\end{document}